\newtheorem{theorem}{Theorem}
\newtheorem{corollary}[theorem]{Corollary}
\theoremstyle{definition}
\newtheorem{proposition}[theorem]{Proposition}
\newtheorem{example}[theorem]{Example}
\theoremstyle{remark}
\numberwithin{equation}{section}
\begin{document}
\title{ITERATIVE ROOTS OF MULTIFUNCTIONS}

\author{B.V. RAJARAMA BHAT}
\address{Indian Statistical Institute, Stat-Math. Unit, R V College Post, Bengaluru 560059, India}
\email{bhat@isibang.ac.in}
\thanks{The first author is supported by  J
    C Bose Fellowship of the Science and Engineering Board, India.}

\author{CHAITANYA GOPALAKRISHNA}
\address{Indian Statistical Institute, Stat-Math. Unit, R V College Post, Bengaluru 560059, India}
\email{cberbalaje@gmail.com, chaitanya\_vs@isibang.ac.in}

\thanks{The second author is
    supported
    by the National Board for Higher Mathematics, India through No:
    0204/3/2021/R\&D-II/7389.}

\subjclass[2020]{Primary  39B12; Secondary 54C60; 05C20.}



\keywords{Iterative  root, multifunction, pullback multifunction, set-value point, cardinality.}

\begin{abstract}
Some easily verifiable sufficient conditions for the nonexistence of
iterative roots for  multifunctions on arbitrary nonempty sets are
presented. 
Typically if the
graph of the multifunction has a distinguished point with a
relatively large number of paths leading to it then such a
multifunction does not admit any iterative root. These results can
be applied to single-valued maps by considering their pullbacks as
multifunctions. This has been illustrated by showing the
nonexistence of iterative roots of some specified orders for certain
complex polynomials.
\end{abstract}

\maketitle


\section{Introduction}

Given a map $F:X\to X$ on a nonempty set $X$ and an integer $n\ge 1$, the {\it iterative root problem} is to find a map $G:X\to X$ such that functional equation
\begin{eqnarray}\label{it-root}
    G^n=F
\end{eqnarray}
is true on $X$, where $G^n$ is the $n$-th order iterate of $G$ defined recursively by $G^n=G\circ G^{n-1}$ and $G^0={\rm id}$, the identity map on $X$. We call $G$ an $n$-th order iterative root of $F$.
Since the initial works of Babbage \cite{Babbage1815}, Abel \cite{Abel} and K\"{o}nigs \cite{Konigs}, the iterative root problem \eqref{it-root}, which is a weak version of the embedding flow problem \cite{fort1955} of dynamical systems and is applicable to informatics \cite{Kindermann} and neural networks \cite{Iannella}, has been extensively studied in various aspects.
Many of the results are included in the monographs \cite{Kuczma1968,kuczma1990}, the book \cite{Targonski1981}, and the survey papers \cite{Baron-Jarczyk,Targonski1995,zdun-soalrz}. Some of the most recent findings in this direction can also be found in \cite{BG,BGpreprint,Edgar,LiZhang2018,Lin2014,Lin-Zeng-Zhang2017,LiuLiZhang2018,Liu-zhang2021,Yu2021}.

Many researchers
\cite{BGpreprint,Blokh,Humke-Laczkovich1989,simon1989}  have
highlighted the difficulty in solving equation \eqref{it-root}, even
in the class of continuous self-maps of an interval, necessitating
the extension of the iterative root concept to multifunctions. By a
{\it multifunction} (or {\it multivalued function}) on a nonempty
set $X$, we simply mean a function from $X$ to the power set $2^X.$
Given two multifunctions $F$ and $G$ on $X$, their composition
$F\circ G$ is defined by $(F\circ G)(x)=F(G(x))$, where the {\it
image} $F(A)$
of a set $A\subseteq X$ is defined by $F(A)=\cup_{x\in A}F(x)$. Then, as shown in \cite{Jarczyk-Zhang}, the $n$-th order iterate $F^n$ of $F$ is defined recursively by
\begin{eqnarray*}
    F^n(x)=\bigcup_{y\in F^{n-1}(x)}F(y)\quad \text{and}\quad  F^0(x)=\{x\},
\end{eqnarray*}
which satisfy that $F^n(F^m(A))=F^{n+m}(A)=F^m(F^n(A))$. As in
\cite{Hu-Papageorgiou}, a point $x_0\in X$ is said to be a {\it
set-value point} of $F$ if $\#F(x_0)\ge 2$, where $\#A$ denote the
cardinality of a set $A\subseteq X$. Powier\.{z}a
\cite{Powierza1999,Powierza2001,Powierza2002},  and Jarczyk and
Powier\.{z}a \cite{Jarczyk-Powierza},   discussed the existence of
the smallest set-valued iterative roots $G$ of bijections $F$ in the
{\it inclusion sense}, where $F(x)\in G^n(x)$ for all $x\in X$.
Another approach (see \cite{Jarczyk-Zhang,
li2009,LiuLiZhang2021,Lydzinska2018,Zhang-Huang} for example)  is to
look for solutions $G:X\to 2^X$ of the equation \eqref{it-root} for
a multifunction $F$, which is known as the {\it identity sense}. In
\cite{Jarczyk-Zhang, li2009}, some sufficient conditions for the
nonexistence of iterative square roots of multifunctions $F$ on $X$
with exactly one set-value point were obtained,   and in \cite{Lydzinska2018}, some of these results were improved and generalized to deal with order $n$.
The special case where $X$ is a compact interval in the real line $\mathbb{R}$ and $F$ an increasing upper semi-continuous multifunction  was also studied, and results on the construction of iterative roots were presented in \cite{li2009, Lydzinska2018,Nan-li-2011,Zhang-Huang}. However, it appears that no results are found for multifunctions with multiple set-value points, with the exception of the reference \cite{LiuLiZhang2021} for increasing upper semi-continuous multifunctions on compact intervals with finitely many set-value points.



In Section 2 we investigate the equation \eqref{it-root} in the
identity sense for multifuctions $F$ on a general nonempty set $X$.
 The crucial observation we make is
that if there is a point $x_0\in X$ with a large number of  paths
ending at it in the graph of $F$, it creates some kind of bottleneck
in the system $F:X\to 2^X$ (see Figure \ref{Fig3} for example) and
thus the multifunction $F$ will admit no roots of any order. This is
heavily inspired by a similar phenomenon observed for single-valued
maps in \cite{BG, BGpreprint}. We do impose a strong constraint that
the cardinality of $2$-paths ending at $x_0$ is {\em very large} in
comparison to the number of $1$-paths (edges) beginning or ending at
other points, in addition to some mild conditions such as
 $F$ has domain $X$ and that $x_0$ is
not a fixed point of $F$.   Some of the results also demand that $F$
has image $X$. All of this is made precise in Theorem \ref{No-root}
 by quantifying the notion of largeness in various ways. Notably, we put no constraints on the number of
 set valued points for $F$.
 We
present several  examples to show that these results can be applied
in some situations where the known results in \cite{Jarczyk-Zhang,
li2009,Lydzinska2018} fail. 

Multifunctions appear naturally in topology, measure theory and
other fields  while considering inverse images of single-valued
maps. Keeping this in mind, in Section \ref{Sec3}, we introduce
a fundamental class of multifunctions $F$ called {\it pullback
multifunctions}.  An important example of such a multifunction is
the {\it $k$-th root function} $z:=re^{i\theta}\mapsto
z^{1/k}:=\{r^{1/k}e^{i((\theta+2j\pi)/k)}:j=0,1,\ldots, k-1\}$ on
the complex plane $\mathbb{C}$.  The iterative root problem for such
multifunctions can be reduced to the corresponding problem for
single-valued maps. We present several examples to demonstrate
concrete applications of our general results, particularly in determining the nonexistence of iterative roots for
certain complex polynomials.


\section{General multifunctions} \label{Sec2}

In this section we present several sufficient conditions for the nonexistence of iterative roots for multifunctions on arbitrary nonempty sets.
First, we present some  notions and notation required for our discussion.
Let $X$ be a nonempty set and $\mathcal{F}(X)$  consists of all multifunctions $F$ on $X$. For each set $A\subseteq X$, $F\in \mathcal{F}(X)$, and $k\ge 1$, let $F^{-k}(A)$ denote the $k$-th order {\it inverse image} of $A$ by $F$  defined by  $F^{-k}(A)=\{x\in X:F^k(x)\cap A\ne \emptyset\}$.
As in \cite[p.34]{Aubin-Frankowska}, the {\it domain} and {\it image} of an $F\in \mathcal{F}(X)$ are defined by
\begin{eqnarray*}
    {\rm Dom}(F)=\{x\in X:\#F(x)\ge 1\}~\quad \text{and}~\quad {\rm Im}(F)=F(X).
\end{eqnarray*}

As in \cite[p.34]{Aubin-Frankowska},  we define the {\it graph} of any $F\in \mathcal{F}(X)$ as the directed graph $\mathcal{G}_F=(X, E_F)$, with the vertex set $X$ and the edge set $E_F=\{(x,y): y\in F(x)\}$. For  $x,y \in X$ and $k\in \mathbb{N}$, by a {\it $k$-path} (or {\it length $k$  path}) in $\mathcal{G}_F$ from $x$ to $y$, we mean a sequence $((x,u_1), (u_1,u_2), \ldots, (u_{k-2}, u_{k-1}), (u_{k-1},y))$ of edges (not necessarily distinct) that joins a sequence of vertices $(x,u_1,\ldots,u_{k-1},y)$.
For each $F, G\in \mathcal{F}(X)$, $x,y,z \in X$, $A,B\subseteq X$ and $k, l\in \mathbb{N}$, let
\begin{align*}
    P_F(x,y;k)&:=\{p: p~\text{is a}~k\text{-path in}~\mathcal{G}_F~\text{from}~x~\text{to}~y\},\\
    P_F(A,y;k)&:=\{p: p~\text{is a}~k\text{-path in}~\mathcal{G}_F~\text{that begins at a point in}~A~\text{and  ends at}~y\},\\
    P_F(x,A;k)&:=\{p: p~\text{is a}~k\text{-path in}~\mathcal{G}_F~\text{that begins at}~x~\text{and ends at a point in}~A\},\\
    P_F(A,B;k)&:=\{p: p~\text{is a}~k\text{-path in}~\mathcal{G}_F~\text{that begins at a point in}~A~\text{and ends at a point in}~B\},
\end{align*}
and
\begin{align*}
    P_F(x,y;k)\bigvee P_G(y,z;l) &:=\{p\vee q: p\in     P_F(x,y;k)~\text{and}~q\in P_G(y,z;l)\},
\end{align*}
where for any two paths $$p:=((x,u_1), (u_1,u_2), \ldots, (u_{k-2},
u_{k-1}), (u_{k-1},y)) \in P_F(x,y;k)$$ and $$q:=((y,v_1),
(v_1,v_2), \ldots, (v_{l-2}, v_{l-1}), (v_{l-1},z))\in P_G(y,z;l),$$
$p\vee q$ is the $(k+l)$-path in $\mathcal{G}_F\cup\mathcal{G}_G$ obtained by concatenating them, defined by
\begin{eqnarray*}
    p\vee q=((x,u_1), (u_1,u_2), \ldots, (u_{k-2}, u_{k-1}), (u_{k-1},y), (y,v_1), (v_1,v_2), \ldots,(v_{l-2}, v_{l-1}), (v_{l-1},z)).
\end{eqnarray*}
 Then it is easy to see that
\begin{eqnarray*}
    P_F(A,y;k)=\bigcup_{x\in A}P_F(x,y;k),\quad  P_F(x,A;k)=\bigcup_{y\in A}P_F(x,y;k),
\end{eqnarray*}
\begin{eqnarray*}
     P_F(A,B;k)=\bigcup_{y\in B}P_F(A,y;k)=\bigcup_{x\in A}P_F(x,B;k)
\end{eqnarray*}
and
\begin{eqnarray*}
    P_F(x,y;k) \bigvee P_G(y,z;l)=P_{G\circ F}(x,z;k+l)
\end{eqnarray*}
for all $F, G\in \mathcal{F}(X)$, $x,y,z \in X$, $A,B\subseteq X$
and $k, l\in \mathbb{N}$.  Let
\begin{align*}
    \mathcal{F}_{M}(X)&:=\{F\in \mathcal{F}(X):\#P_F(x,X;1)\le M~\text{for all}~x\in X\}~ \text{for each}~M\in \mathbb{N}, \\
    \mathcal{F}_{\textsf{f}}(X)&:=\{F\in \mathcal{F}(X):P_F(x,X;1)~\text{is finite for all}~x\in X\}, \\
    \mathcal{F}_{\textsf{c}}(X)&:=\{F\in \mathcal{F}(X):P_F(x,X;1)~\text{is countable for all}~x\in
    X\}.
\end{align*}
It is useful to note that $\#P_F(X,x;1)=\#F^{-1}(\{x\})$ and $\#P_F(X,x;k)\ge\#F^{-k}(\{x\})$, and similarly $\#P_F(x,X;1)=\#F(x)$ and $\#P_F(x,X;k)\ge\#F^k(x)$ for all $x\in X$ and $k\ge 2$.

Now we present a theorem which can be considered the main result of
this article. It is motivated by analogous results for single-valued
maps in \cite{BG, BGpreprint}.  We observe that, generally, if a
multifunction $F$ has a special point $x_0$ with relatively large
number of paths leading to it, then $F$ does not admit any iterative
root. Actually, we only compare the number of paths of length two
reaching $x_0$ with the number of edges at other points.   The
comparison is made precise in the following theorem.

\begin{theorem}\label{No-root}
 Let $F$ be a multifunction on $X$ such that ${\rm Dom}(F)=X$ and $x_0 \notin F(x_0)$ for some $x_0\in X$.
    \begin{enumerate}

 \item[\bf (1)]{\rm (Large finite sets vs small finite sets)} Suppose that $F$ satisfies {\bf (1.a)} $\# P_F(X,x_0;2)>MN^3$
  and {\bf (1.b)}  $\# P_F(X,x;1)\le N$ for all $x\ne x_0$ in
        $X$ and for some $M, N \in {\mathbb N}$.
            %
            %
        Then $F$ has no iterative roots
        of order $n\ge 2$ in $\mathcal{F}_{M}(X)$.
        If, in addition, $F\in \mathcal{F}_{M}(X)$ and ${\rm Im}(F)=X$, then $F$ has no iterative roots of order $n\ge 2$ at all.

  \item[\bf (2)] {\rm (Infinite sets vs finite sets)} Suppose that $F$ satisfies {\bf (2.a)} $P_F(X,x_0;2)$ is infinite, and {\bf (2.b)}  $ P_F(X,x;1)$ is finite for
        all $x\ne x_0$ in $X$.
        Then $F$ has no iterative roots of order $n\ge 2$ in $\mathcal{F}_{\textsf{f}}(X)$. If, in addition, $F\in \mathcal{F}_{\textsf{f}}(X)$ and ${\rm Im}(F)=X$, then $F$ has no iterative roots of order $n\ge 2$ at all.

  \item[\bf (3)] {\rm (Uncountable sets vs countable sets)} Suppose that $F$ satisfies {\bf (3.a)} $P_F(X,x_0;2)$ is uncountable, and {\bf (3.b)}  $ P_F(X,x;1)$ is countable for
        all $x\ne x_0$ in $X$.
        Then $F$ has no iterative roots of order $n\ge 2$ in $\mathcal{F}_{\textsf{c}}(X)$. If, in addition, $F\in \mathcal{F}_{\textsf{c}}(X)$ and ${\rm Im}(F)=X$, then $F$ has no iterative roots of order $n\ge 2$ at all.
    \end{enumerate}
\end{theorem}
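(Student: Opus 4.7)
My plan is a path-counting argument in the digraphs $\mathcal{G}_F$ and $\mathcal{G}_G$ of $F$ and a hypothetical iterative root $G$. I outline part~(1) in detail; parts~(2) and~(3) follow the same template, with the numerical inequalities replaced by the absorption laws of cardinal arithmetic.

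I would first dispose of each ``if, in addition'' clause by showing that the root inherits the class of $F$. Assume $\mathrm{Im}(F)=X$ and $G^n=F$: then $\mathrm{Im}(G^n)=X\subseteq\mathrm{Im}(G)$ forces $G(X)=X$, and induction yields $G^k(X)=X$ for every $k\ge 0$. Thus each $x\in X$ lies in $G^{n-1}(v)$ for some $v$, so $G(x)\subseteq G^n(v)=F(v)$ and $|G(x)|\le|F(v)|$, placing $G$ in $\mathcal{F}_M(X)$ (or the finite or countable analog). For the main part of~(1), I then suppose for contradiction that $G\in\mathcal{F}_M(X)$ satisfies $G^n=F$. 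The quantitative key is a backward \emph{probe}: for $y\in X$ and $v\in G^{-1}(\{y\})$, $G^{n-1}(y)\subseteq G^n(v)=F(v)$, so any $z\in G^{n-1}(y)\setminus\{x_0\}$ forces $|G^{-1}(\{y\})|\le|F^{-1}(\{z\})|\le N$. Call $y$ \emph{bad} when $G^{n-1}(y)=\{x_0\}$ and let $B$ be the bad set. For $y\in B$ one has $F(y)=G(G^{n-1}(y))=G(x_0)$, and $x_0\notin F(x_0)$ precludes $G(x_0)=\{x_0\}$ (otherwise the $G$-iterates of $x_0$ would stabilise at $\{x_0\}$, giving $x_0\in F(x_0)$); hence any $z\in G(x_0)\setminus\{x_0\}$ witnesses $B\subseteq F^{-1}(\{z\})$, so $|B|\le N$.

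The central step lifts length-2 $F$-paths to length-$2n$ $G$-paths: sending $(x,u,x_0)\in P_F(X,x_0;2)$ to any concatenation $(x=w_0,\dots,w_n=u,\dots,w_{2n}=x_0)$ of chosen length-$n$ $G$-paths $x\to u$ and $u\to x_0$ is injective (recover $(x,u,x_0)$ from positions $0,n,2n$), so $|P_G(X,x_0;2n)|\ge|P_F(X,x_0;2)|>MN^3$. I would then bound $|P_G(X,x_0;2n)|$ from above by tracing backward from $x_0$: each backward step through a non-bad vertex contributes a factor at most $N$ by the probe, and the excursions through bad vertices are controlled via $|B|\le N$ and the out-degree bound $|G(v)|\le M$. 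The resulting upper bound of order $MN^3$ contradicts the lower bound.

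The hard part will be this upper bound: bad vertices $y\in B$ only satisfy $G^{-1}(\{y\})\subseteq F^{-1}(\{x_0\})$, creating a potential self-reference with $|F^{-1}(\{x_0\})|$ itself. I expect to break the loop by partitioning the vertices $v$ with $G(v)\cap B\ne\emptyset$ into those with $G(v)\subseteq B$ (which forces $F(v)=G(x_0)$, so $v\in F^{-1}(\{z\})$ with $|F^{-1}(\{z\})|\le N$) and those with $G(v)$ also meeting $B^c$ (where a non-$x_0$ probe applies), absorbing the combinatorial overhead via $|G(v)|\le M$. Parts~(2) and~(3) sidestep this quantitative bookkeeping entirely, since the cardinal absorption laws render the bad-vertex contribution invisible in the final bound.
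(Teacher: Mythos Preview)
Your reduction of the ``in addition'' clauses and your lower bound $|P_G(X,x_0;2n)|\ge|P_F(X,x_0;2)|$ are fine, but the upper bound on $|P_G(X,x_0;2n)|$ does not go through as sketched, and this is a genuine gap rather than missing bookkeeping.

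First, a concrete error: if $G(v)\subseteq B$ then $F(v)=G^{n-1}(G(v))=\bigcup_{w\in G(v)}G^{n-1}(w)=\{x_0\}$, not $G(x_0)$. Hence such $v$ lie only in $F^{-1}(\{x_0\})$, a set on which you have no bound (indeed $|F^{-1}(\{x_0\})|>MN^2$ is forced by the hypotheses). So the bad backward steps are not controlled by $N$ in the way you need. This also breaks the claim that parts~(2) and~(3) ``sidestep'' the difficulty: for $y\in B$ one only has $G^{-1}(\{y\})\subseteq F^{-1}(\{x_0\})$, and $F^{-1}(\{x_0\})$ is necessarily infinite (resp.\ uncountable) under those hypotheses.

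Second, and more structurally, step-by-step backward tracing along a length-$2n$ path produces a bound of order $N^{2n}$ from the non-bad vertices alone, which depends on $n$ and cannot be brought down to $MN^3$. The quantity $|P_G(X,x_0;2n)|$ is simply too large a target: an $F$-edge can lift to many $G$-$n$-paths, and nothing in your probe controls that multiplicity. The paper avoids this by estimating $P_G(G^{n-1}(X),x_0;n+1)$ instead, exploiting $G^{n+1}=G\circ F=F\circ G$ so that an $(n+1)$-path in $G$ decomposes as a single $F$-edge (absorbing $n$ of the $G$-steps as one block) together with a single leftover $G$-step, in either order. One ordering yields the upper bound $MN^2$ via your $|B|\le N$-style argument; the other ordering, combined with an analogue of your probe for $(n{-}1)$-paths rather than single edges, turns the hypothesis $|P_F(X,x_0;2)|>MN^3$ into a lower bound exceeding $MN^2$. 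Working with $n+1$ rather than $2n$ is what keeps the final estimate polynomial in $M,N$ and independent of $n$.
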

\begin{proof}
    The key to the proof is to estimate $P_G(G^{n-1}(X), x_0;n+1)$ in two different ways, where $G$ is an $n$-th root of $F$,
    by observing $G^{n+1}=F\circ G=G\circ F$.
    To prove the first part of result {\bf (1)}, suppose,
    on the contrary, that $F$ has an iterative root $G$ of some order $n\ge 2$ in $\mathcal{F}_{M}(X)$.
    Since $x_0\notin F(x_0)$, it is clear that $x_0\notin G(x_0)$. We first establish the following two inequalities that we use to estimate $P_G(G^{n-1}(X), x_0;n+1)$.
    \begin{enumerate}
        \item[\bf (i)] $    \#P_G(G^{n-2}(X),x_0;1)\le NM$.

        \item[\bf (ii)] $\# P_G(X,y;n-1)\le N$ whenever   $y\in G^{n-1}(X)\cap G^{-1}(\{x\})$ and $x\ne x_0$.
    \end{enumerate}

Since $P_G(X,x_0;n-1)\ne \emptyset$ by  {\bf (1.a)}, and $P_G(x_0,X;1)\ne \emptyset$ as ${\rm Dom}(F)=X$,
we have $P_G(X,x_0;n-1) \bigvee P_G(x_0,X;1)\ne \emptyset$. Also, since
    \begin{align*}
        P_G(X,x_0;n-1)&=P_G(X,G^{n-2}(X);n-2)\bigvee P_G(G^{n-2}(X),x_0;1),
    \end{align*}
    we have
    \begin{align}\label{01}
        \# P_G(X,x_0;n-1)\ge \#P_G(G^{n-2}(X),x_0;1).
    \end{align}
   In particular, we  see  from \eqref{01} that
        \begin{align}
         \#\left(   P_G(X,x_0;n-1) \bigvee P_G(x_0,X;1)\right) &\ge \#P_G(G^{n-2}(X),x_0;1). \label{02}
    \end{align}
Further, we have
    \begin{align}\label{03}
    P_G(X,x_0;n-1) \bigvee P_G(x_0,X;1)&=   P_G(X,x_0;n-1)\bigvee P_G(x_0,G(x_0);1) \nonumber \\
    &\subseteq  P_G(X,G(x_0);n) \nonumber \\
    & =  P_F(X,G(x_0);1) \nonumber \\
    &=\bigcup_{y\in G(x_0)}P_F(X,y;1).
\end{align}
    Therefore, since $\#G(x_0)\le M$ as $G\in \mathcal{F}_{M}(X)$,  by using \eqref{02} and {\bf (1.b)}
    we obtain
    \begin{align*}
        \#P_G(G^{n-2}(X),x_0;1)
        &\le  \sum_{y\in G(x_0)}\# P_F(X,y;1)
        \le N \cdot \#G(x_0) \le NM.
    \end{align*}
This proves {\bf (i)}.
    Since
    \begin{align}\label{04}
        P_F(X,x;1)&=  \bigcup_{y\in G^{n-1}(X)\cap G^{-1}(\{x\})}P_G(X,y;n-1)\bigvee P_G(y,x;1)
    \end{align}
for $x\ne x_0$ with $F^{-1}(\{x\})\ne \emptyset$,   by using {\bf (1.b)} we have
    \begin{align*}
        \# P_G(X,y;n-1)\le \#\left(P_G(X,y;n-1)\bigvee P_G(y,x;1)\right) \le \# P_F(X,x;1) \le N
    \end{align*}
    whenever   $y\in G^{n-1}(X)\cap G^{-1}(\{x\})$ and $x\ne x_0$. This proves {\bf (ii)}.

    We now estimate $P_G(G^{n-1}(X), x_0;n+1)$ in two different ways to get a contradiction, as shown below.
    When $G^{n+1}=G\circ F$, we have
    \begin{align}\label{GF}
        P_G(X,x_0;n+1)&= \bigcup_{y\in F(X)\cap G^{-1}(\{x_0\})}P_F(X,y;1)\bigvee P_G(y,x_0;1).
    \end{align}
    Then by using {\bf (1.b)} and {\bf (i)}  we see that
    \begin{align*}
        \#  P_G(X,x_0;n+1) &\le N\sum_{y\in F(X)\cap G^{-1}(\{x_0\})} \# P_G(y,x_0;1) \nonumber\\
        &= N\cdot \#P_G(F(X)\cap G^{-1}(\{x_0\}),x_0;1) \nonumber\\
        &\le N\cdot \#P_G(F(X),x_0;1) \nonumber\\
        &= N\cdot \#P_G(G^n(X),x_0;1) \nonumber\\
        &\le  N\cdot \#P_G(G^{n-2}(X),x_0;1)\\
        &\le N\cdot NM=MN^2.
    \end{align*}
Therefore, as $G^{n-1}(X)\subseteq X$, we have
\begin{align}\label{C-GF}
     \#  P_G(G^{n-1}(X),x_0;n+1) \le MN^2.
\end{align}
    On the other hand, when $G^{n+1}=F\circ G$, we have
    \begin{align}
        P_F(X,x_0;2)&=P_F(X,F(X);1)\bigvee P_F(F(X),x_0;1) \nonumber \\
        &=\left(P_G(X,G^{n-1}(X);n-1)\bigvee P_G(G^{n-1}(X),F(X);1)\right)\bigvee P_F(F(X),x_0;1) \nonumber \\
        &=P_G(X,G^{n-1}(X);n-1)\bigvee \left(P_G(G^{n-1}(X),F(X);1)\bigvee P_F(F(X),x_0;1) \right) \nonumber \\
        &=\bigcup_{x\in  F^{-1}(\{x_0\})}~\bigcup_{y\in G^{n-1}(X)\cap G^{-1}(\{x\})}P_G(X,y;n-1)\bigvee \left( P_G(y,x;1) \bigvee P_F(x,x_0;1)\right). \label{FG}
    \end{align}
    Therefore, by using {\bf (1.a)} and {\bf (ii)} we get
    \begin{align*}
        MN^3 < \# P_F(X,x_0;2)
        &\le N \sum_{\substack{x\in  F^{-1}(\{x_0\})\\ y\in G^{n-1}(X)\cap G^{-1}(\{x\})}}\#\left(P_G(y,x;1) \bigvee P_F(x,x_0;1)\right)\\
        &=N\sum_{\substack{y\in G^{n-1}(X)\cap G^{-(n+1)}(\{x_0\})}}\#P_G(y,x_0;n+1)\\
        & = N\cdot \#P_G(G^{n-1}(X)\cap G^{-(n+1)}(\{x_0\}),x_0;n+1)\nonumber\\ &\le N\cdot\#P_G(G^{n-1}(X),x_0;n+1),
    \end{align*}
    implying that
    \begin{align}\label{C-FG}
        \#P_G(G^{n-1}(X),x_0;n+1)>MN^2.
    \end{align}
    Thus, a contradiction obtained from \eqref{C-GF} and \eqref{C-FG} shows that $F$ has no iterative roots of order $n\ge 2$ in $\mathcal{F}_{M}(X)$.

    Now,
    to prove the second part, let $F\in \mathcal{F}_{M}(X)$ with ${\rm Im}(F)=X$, and suppose, on the contrary, that $G\in \mathcal{F}(X)$ is an iterative root of $F$ of some order $n\ge 2$. Then ${\rm Im}(G)=X$, and
    by the first part we have  $\# P_G(\tilde{x},X;1)>M$ for some $\tilde{x}\in X$.
    Let $\tilde{x}\in G^{n-1}(\tilde{y})$ for some $\tilde{y}\in X$ that exists because  $G^{n-1}(X)=X$. Then, since
    \begin{eqnarray}\label{ftildey}
        F(\tilde{y})=G(G^{n-1}(\tilde{y}))\supseteq G(\tilde{x}),
    \end{eqnarray}
    we obtain $\#P_F(\tilde{y},X;1)>M$, which is a contradiction to our assumption that $F\in \mathcal{F}_M(X)$. Therefore $F$ has no iterative roots of order $n\ge 2$ at all, completing the  proof of result  {\bf (1)}.

    Next, to prove {\bf (2)},  suppose that $F$ has an iterative root $G$ of some order $n\ge 2$ in $\mathcal{F}_{\textsf{f}}(X)$.  Then, using the similar arguments as above, it is clear that $x_0\notin G(x_0)$, $P_G(X,x_0;n-1) \bigvee P_G(x_0,X;1)\ne \emptyset$, and equations \eqref{01}, \eqref{02}, \eqref{03}, \eqref{04}, \eqref{GF} and \eqref{FG} are  satisfied.
    Additionally,  since $P_G(x_0,X;1)$ is
 finite as $G\in \mathcal{F}_{\textsf{f}}(X)$, and $P_F(X,y;1)$ is finite for all $y\ne x_0$ by {\bf (2.b)},  we see from \eqref{03} that $P_G(X,x_0;n-1) \bigvee P_G(x_0,X;1)$ is finite, which implies  by \eqref{02} that $P_G(G^{n-2}(X),x_0;1)$ is finite. Further, it follows from \eqref{04} and {\bf (2.b)} that
    \begin{align}\label{05}
        P_G(X,y;n-1)~\text{is finite if}~y\in G^{n-1}(X)\cap G^{-1}(\{x\})~\text{and}~x\ne x_0.
    \end{align}

    We now estimate  $\#P_G(G^{n-1}(X),x_0;n+1)$ in two different ways, as above, to find a contradiction.
    When $G^{n+1}=G\circ F$, by using \eqref{GF}, {\bf (2.b)} and the fact that $P_G(G^{n-2}(X),x_0;1)$ is finite, we see that $P_G(G^{n-1}(X),x_0;n+1)$ is finite. On the other hand, when $G^{n+1}=F\circ G$, by using \eqref{FG}, \eqref{05} and {\bf (2.a)}, it follows that $P_G(G^{n-1}(X),x_0;n+1)$ is infinite. Therefore $F$ has no iterative roots of order $n\ge 2$ in $\mathcal{F}_{\textsf{f}}(X)$.

    In order to prove the second part, let $F\in \mathcal{F}_{\textsf{f}}(X)$ with ${\rm Im}(F)=X$,  and assume, on the contrary, that $G\in \mathcal{F}(X)$ is an iterative root of $F$ of some order $n\ge 2$. Then ${\rm Im}(G)=X$, and
    by the first part we have  $P_G(\tilde{x},X;1)$ is infinite for some $\tilde{x}\in X$. Since  $G^{n-1}(X)=X$,
    we have $\tilde{x}\in G^{n-1}(\tilde{y})$ for some $\tilde{y}\in X$. Then, by \eqref{ftildey} it follows that $P_F(\tilde{y},X;1)$ is infinite,  contradicting  our assumption that $F\in \mathcal{F}_{\textsf{f}}(X)$. Therefore $F$ has no iterative roots of order $n\ge 2$ at all.  This completes the proof of result {\bf (2)}.

    The proof of result {\bf (2)}  is based on the fact that a finite union of finite sets is finite.
    The proof of result {\bf (3)} is similar to that of {\bf (2)}, using the result that a countable union of countable sets is countable.
\end{proof}

As a consequence of the above theorem, we have the following
corollary. 
Instead of counting paths, we now count the number of points in certain sets.

\begin{corollary}\label{C1}
     Let $F$ be a multifunction on $X$ such that ${\rm Dom}(F)=X$ and $x_0 \notin F(x_0)$ for some $x_0\in X$.
    \begin{enumerate}
        \item[\bf (1)] {\rm (Large finite sets vs small finite sets)}
        Suppose that $F$ satisfies
          $\# F^{-2}(\{x_0\})>MN^3$  and   $\# F^{-1}(\{x\})\le N$  for
        all  $x\ne x_0$ in $X$ and for some $N, M\in \mathbb{N}$.
            %
            %
        Then $F$ has no iterative roots
        of order $n\ge 2$ in $\mathcal{F}_{M}(X)$.
        If, in addition, $F\in \mathcal{F}_{M}(X)$ and ${\rm Im}(F)=X$, then $F$ has no iterative roots of order $n\ge 2$ at all.

        \item[\bf (2)] {\rm (Infinite sets vs finite sets)} Suppose that $F$ satisfies  $ F^{-2}(\{x_0\})$ is infinite and   $ F^{-1}(\{x\})$ is finite for
        all $x\ne x_0$ in $X$.
        Then $F$ has no iterative roots of order $n\ge 2$ in $\mathcal{F}_{\textsf{f}}(X)$. If, in addition, $F\in \mathcal{F}_{\textsf{f}}(X)$ and ${\rm Im}(F)=X$, then $F$ has no iterative roots of order $n\ge 2$ at all.

        \item[\bf (3)] {\rm (Uncountable sets vs countable sets)} Suppose that $F$ satisfies $ F^{-2}(\{x_0\})$ is uncountable and   $ F^{-1}({x})$ is countable for
        all $x\ne x_0$ in $X$.  Then $F$ has no iterative roots of order $n\ge 2$ in $\mathcal{F}_{\textsf{c}}(X)$. If, in addition, $F\in \mathcal{F}_{\textsf{c}}(X)$ and ${\rm Im}(F)=X$, then $F$ has no iterative roots of order $n\ge 2$ at all.
    \end{enumerate}
\end{corollary}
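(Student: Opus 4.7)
The plan is to deduce this corollary directly from Theorem \ref{No-root} by translating the preimage hypotheses into the path hypotheses required there. The bridge is the pair of relations noted just before the theorem, namely $\#P_F(X,x;1) = \#F^{-1}(\{x\})$ and $\#P_F(X,x_0;2) \ge \#F^{-2}(\{x_0\})$. The first is an equality because each $1$-path ending at $x$ is just an edge $(u,x)$ with $u\in F^{-1}(\{x\})$, so the two sets are in natural bijection. The second is an inequality of cardinals because for every $v\in F^{-2}(\{x_0\})$ there exists some intermediate $u$ with $u\in F(v)$ and $x_0\in F(u)$, yielding a $2$-path in $\mathcal{G}_F$ from $v$ to $x_0$; choosing one such path for each $v$ gives an injection $F^{-2}(\{x_0\})\hookrightarrow P_F(X,x_0;2)$.

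For part \textbf{(1)}, the hypothesis $\# F^{-2}(\{x_0\})>MN^3$ together with the second inequality above gives $\# P_F(X,x_0;2)>MN^3$, which is condition \textbf{(1.a)} of the theorem, while $\#F^{-1}(\{x\})\le N$ together with the first identity gives $\# P_F(X,x;1)\le N$ for all $x\ne x_0$, which is condition \textbf{(1.b)}. Both assertions of part \textbf{(1)} of Corollary \ref{C1}, the nonexistence of roots in $\mathcal{F}_M(X)$ and the strengthened statement under ${\rm Im}(F)=X$ and $F\in \mathcal{F}_M(X)$, then follow at once from the corresponding assertions of Theorem \ref{No-root} \textbf{(1)}.

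Parts \textbf{(2)} and \textbf{(3)} are entirely analogous. In part \textbf{(2)}, if $F^{-2}(\{x_0\})$ is infinite then so is $P_F(X,x_0;2)$ by the injection above, giving \textbf{(2.a)}; and if $F^{-1}(\{x\})$ is finite then $P_F(X,x;1)$ is finite by the bijection above, giving \textbf{(2.b)}. In part \textbf{(3)}, one replaces ``infinite'' with ``uncountable'' and ``finite'' with ``countable'' in the same argument to obtain \textbf{(3.a)} and \textbf{(3.b)}. In each case both halves of the corresponding part of Theorem \ref{No-root} apply verbatim.

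There is essentially no obstacle to this argument; the only point needing a second's thought is the uncountable case of part \textbf{(3)}, where the comparison $\#P_F(X,x_0;2)\ge \#F^{-2}(\{x_0\})$ is a cardinal inequality and is justified, as above, by explicitly exhibiting an injection of the preimage set into the path set.
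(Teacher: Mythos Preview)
Your proof is correct and follows exactly the same approach as the paper's own proof, which simply states that the result follows from Theorem~\ref{No-root} because $\#P_F(X,x;2)\ge \#F^{-2}(\{x\})$ and $\#P_F(X,x;1)=\#F^{-1}(\{x\})$ for all $x\in X$. You have merely expanded the justification of these two relations and spelled out how each part transfers, which is fine.
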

\begin{proof}
    Follows from Theorem \ref{No-root}, because $\# P_F(X, x;2)\ge \#F^{-2}(\{x\})$ and $\# P_F(X,x;1)=\# F^{-1}(\{x\})$ for all $x\in X$ and $F\in \mathcal{F}(X)$.
\end{proof}

We remind that the above results
and those  that follow do not demand any restriction on the
cardinality of the set-value points of the map under consideration.
Therefore, unlike the known results in \cite{Jarczyk-Zhang,
li2009,Lydzinska2018}, these ones deal with multifunctions with
set-value points of arbitrary cardinality. Furthermore, in the
special case where $M=1$, the above corollary reduces to Theorem 2
in \cite{BGpreprint}, valid for single-valued maps. It is worth
noting, however, that the proof presented here is both different and
simpler than that presented there. We now illustrate
the above results with some examples.

\begin{example}\label{E1}
    {\rm Let
        \begin{eqnarray*}
            X=\{x_{i}:i\ge 0\}\bigcup\{x_{-1}^{(j)}:1\le j\le 4\}\bigcup\{x_{-i}^{(j)}:i\ge 2~\text{and}~j=1,2\}
        \end{eqnarray*}
          and  $F_1:X\to 2^X$ be defined by
        \begin{eqnarray*}
            F_1(x_{i})&=&\{ x_{i+1}\}\quad \text{for}~i\ge 0,\\
            F_1(x_{-1}^{(j)})&=&\{x_0\}\quad \text{for}~1\le j\le 4,\\
            F_1(x_{-2}^{(j)})&=& \{x_{-1}^{(2j-1)}, x_{-1}^{(2j)}\} \quad \text{for}~j=1,2,\\
            F_1(x_{-i}^{(j)})&=&\{x_{-(i-1)}^{(j)}\}\quad \text{for}~i\ge 3~\text{and}~j=1,2
        \end{eqnarray*}
        (see Figure \ref{Fig3}).  Then $F_1$ has no iterative roots of order $n\ge 2$ by result {\bf (1)} of  Theorem \ref{No-root}  with $N=1$ and $M=2$. Further, it is clear that Corollary \ref{C1} is not applicable, because $\#F^{-2}(\{x_0\})=2=MN^3$.}
\end{example}
\begin{figure}[h]
    \centering
    \includegraphics[scale=0.6]{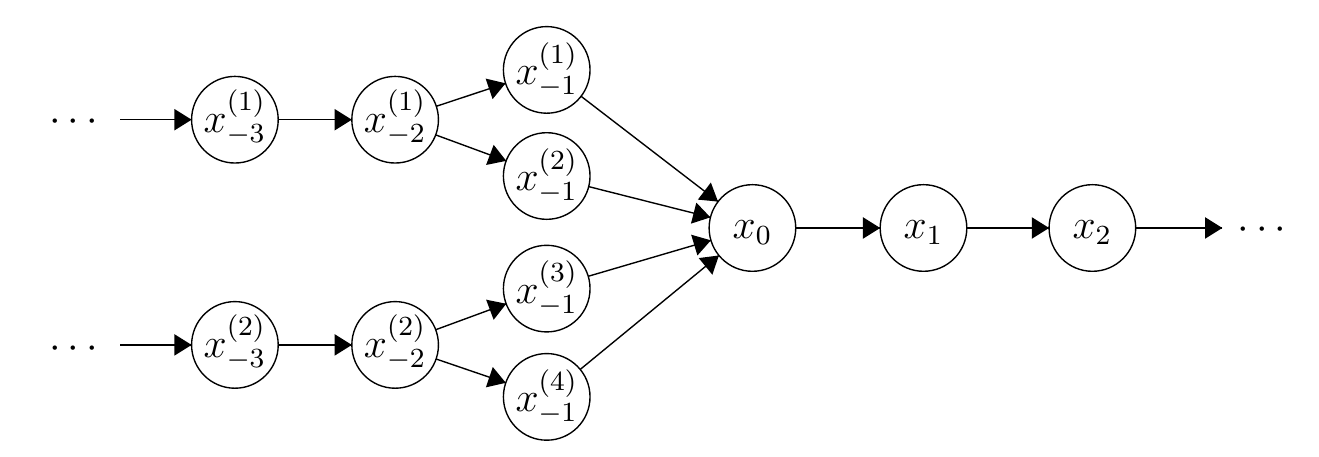}
    \caption{$F_1$}
    \label{Fig3}
\end{figure}

\begin{example}
    {\rm    Consider
        \begin{eqnarray*}
            X=\{x_0\}\bigcup\{x_{i}^{(j)}:i\ge 1~\text{and}~j=1,2\}\bigcup\{x_{-i}^{(j)}:j\ge 1~\text{and}~j=1,2,3\}
        \end{eqnarray*}
          and let   $F_2:X\to 2^X$ be defined by
        \begin{eqnarray*}
            F_2(x_0)&=&\{x_{1}^{(1)},x_{1}^{(2)}\},\\
            F_2(x_{i}^{(j)})&=&\{ x_{i+1}^{(j)}\}\quad \text{for}~i\ge 1~\text{and}~j=1,2,\\
            F_2(x_{-1}^{(j)})&=&\{x_0\}\quad \text{for}~j=1,2,3,\\
            F_2(x_{-i}^{(j)})&=&\{x_{-(i-1)}^{(j)}\}\quad \text{for}~i\ge 2~\text{and}~j=1,2,3
        \end{eqnarray*}
        (see Figure \ref{Fig0}). Then $F_2$ has no iterative roots of order $n\ge 2$ by result {\bf (1)} of  Corollary \ref{C1} (or Theorem \ref{No-root}) with $N=1$ and $M=2$.

        \begin{figure}[h]
            \centering
            \includegraphics[scale=0.6]{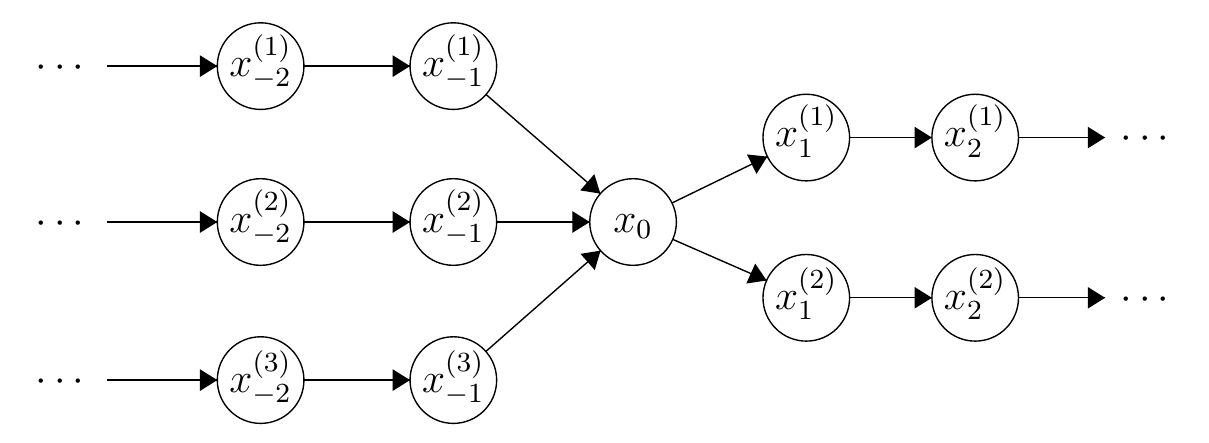}
            \caption{$F_2$}
            \label{Fig0}
        \end{figure}
    }
\end{example}

\begin{example}\label{EX1}
    {\rm Consider the multifunction $F_3:[0,1]\to 2^{[0,1]}$ given by
        \begin{eqnarray*}
            F_3(x)=\left\{\begin{array}{cll}
                \{0,1\}&\text{if}&x=0,\\
                \{\frac{1}{4}-\frac{x}{2}\} &\text{if}&x\in (0,\frac{1}{4}],\\
                \{\frac{1}{8}\} &\text{if}&x\in[\frac{1}{4},\frac{1}{2}],\\
                \{2x-1\}    &\text{if}& x\in (\frac{1}{2},1]
            \end{array}\right.
        \end{eqnarray*}
        with exactly one set-value point $c=0$  (see Figure \ref{Fig1}). Since $\{c\}$ is not a value of $F_3$, not only Theorems 1 and 2 of \cite{Jarczyk-Zhang} and Theorems 1 of \cite{li2009} for iterative square roots, but also Theorem 2 of \cite{Lydzinska2018} for iterative $n$-th roots,
        do not work.
        Furthermore, since $F_3(c)=\{0,1\}$ such that $F_3(1)=\{1\}$, Theorem 2 of \cite{li2009} is also not applicable.
        However, $F_3$ has no iterative roots of order $n\ge 2$ by result {\bf (2)} of  Theorem \ref{No-root} (or Corollary \ref{C1}) with $x_0=1/8$.

        \begin{figure}[h]
 \centering
\includegraphics[scale=1]{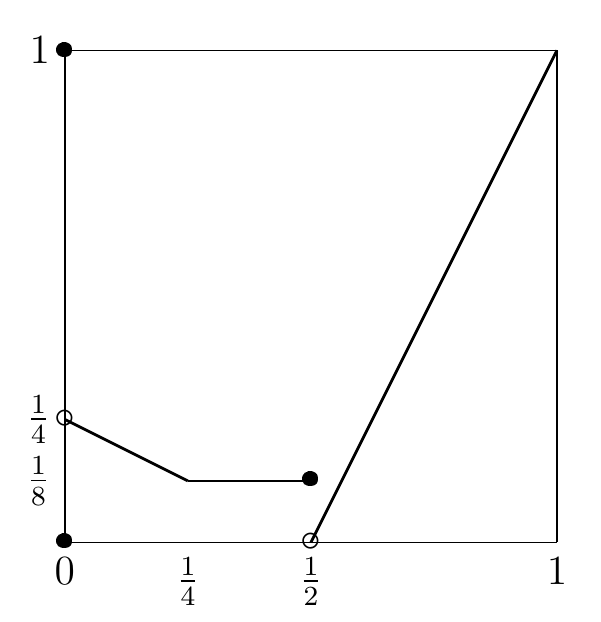}
            \caption{$F_3$}
            \label{Fig1}
            \end{figure}
    }
\end{example}

\begin{example}
    {\rm Consider the multifunction $F_4:[0,1]\to 2^{[0,1]}$ given by
        \begin{eqnarray*}
            F_4(x)=\left\{\begin{array}{cll}
                \{\frac{3}{4},1\}&\text{if}&x=0,\\
                \{\frac{1}{4}-x\}   &\text{if}&x\in (0,\frac{1}{4}],\\
                \{0\}   &\text{if}&x\in[\frac{1}{4},\frac{1}{2}),\\
                \{0\}\cup([\frac{3}{4},1]\cap \mathbb{Q})   &\text{if}&x=\frac{1}{2},\\
                \{2x-1\}    &\text{if}& x\in (\frac{1}{2},1]
            \end{array}\right.
        \end{eqnarray*}
        with the set-value points $0$ and $1/2$ (see Figure \ref{Fig2}).
        Since $F_4$ has more than one set-value point, none of the known results mentioned in Example \ref{EX1} apply. However, $F_4$
        has no iterative roots of order $n\ge 2$ by result {\bf (3)} of Theorem \ref{No-root} (or Corollary \ref{C1}) with $x_0=0$.
    }
\end{example}
\begin{figure}[h]
  \centering
\includegraphics[scale=1]{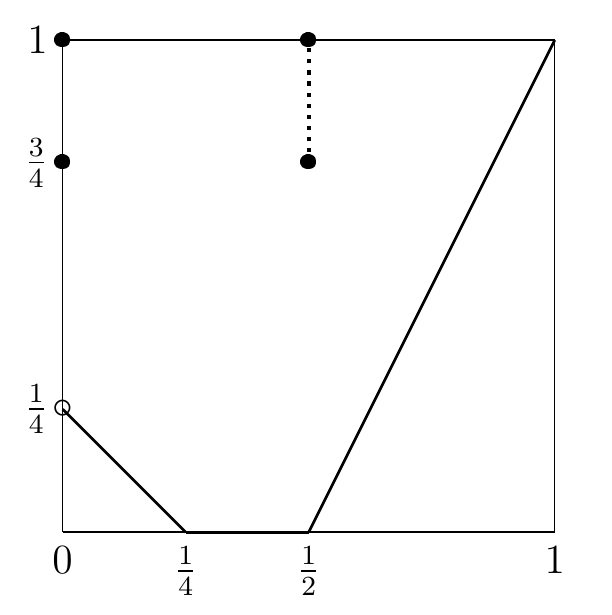}
    \caption{$F_4$}
    \label{Fig2}
\end{figure}

It is worth noting that when $M=1$, the bound $MN^3$ considered in result {\bf (1)} of Theorem \ref{No-root} is optimal, as demonstrated by the examples in {\bf (iv)} of Section 4 in \cite{BGpreprint}.
Additionally, under the assumption of the axiom of choice, the results {\bf (2)} and {\bf (3)} of Theorem \ref{No-root} can be further generalized in the context of infinite cardinal numbers to provide many more similar results for the nonexistence of iterative roots.
 More precisely, we have the following result, where $\aleph_\alpha$'s are precisely the infinite cardinal numbers indexed by the  ordinal numbers $\alpha$, $\le$ is the order among cardinal numbers $\aleph_\alpha$, and
\begin{eqnarray*}
    \mathcal{F}_{\aleph_\alpha}(X):=\{F\in \mathcal{F}(X):\#P_F(x,X;1)<\aleph_{\alpha}~\text{for all}~x\in X\}
\end{eqnarray*}
for each ordinal number $\alpha$.

\begin{theorem}\label{Gen-card} {\rm (Sets of cardinality at least $\aleph_\alpha$ vs those less than $\aleph_\alpha$)}
     Let $F$ be a multifunction on $X$ such that ${\rm Dom}(F)=X$ and $x_0 \notin F(x_0)$ for some $x_0\in X$. Further, suppose that $F$ satisfies $\#P_F(X, x_0;2)\ge \aleph_{\alpha}$ and  $ \#P_F(X,x;1)<\aleph_\alpha$  for
    all $x\ne x_0$ in $X$ and for some infinite cardinal number $\aleph_\alpha$.     Then $F$ has no iterative roots of order $n\ge 2$ in $\mathcal{F}_{\aleph_{\alpha}}(X)$. If, in addition, $F\in \mathcal{F}_{\aleph_{\alpha}}(X)$ and ${\rm Im}(F)=X$, then $F$ has no iterative roots of order $n\ge 2$ at all.
\end{theorem}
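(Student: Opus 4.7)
The plan is to transcribe the arguments of parts {\bf (2)} and {\bf (3)} of Theorem \ref{No-root} into the language of arbitrary infinite cardinals, replacing the closure property ``a finite (respectively, countable) union of finite (respectively, countable) sets is finite (respectively, countable)'' by the general cardinal-arithmetic fact, valid under the axiom of choice, that a union of fewer than $\aleph_\alpha$ sets each of cardinality less than $\aleph_\alpha$ has cardinality less than $\aleph_\alpha$. This closure property is the key structural input; the rest of the proof is a straightforward translation of the finite and countable cases.

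Assume for contradiction that $G\in \mathcal{F}_{\aleph_\alpha}(X)$ is an $n$-th iterative root of $F$ for some $n\ge 2$. Exactly as in the earlier proofs one has $x_0\notin G(x_0)$ (otherwise $x_0\in G^n(x_0)=F(x_0)$, contradicting the hypothesis), and the identities \eqref{01}, \eqref{02}, \eqref{03}, \eqref{04}, \eqref{GF}, \eqref{FG} carry over verbatim. I would first establish the cardinal analogues of steps {\bf (i)} and {\bf (ii)} from the proof of part {\bf (1)}: namely, $\#P_G(G^{n-2}(X),x_0;1)<\aleph_\alpha$, which follows from \eqref{02}--\eqref{03} using $\#G(x_0)<\aleph_\alpha$ (since $G\in \mathcal{F}_{\aleph_\alpha}(X)$), the fact that each $y\in G(x_0)$ satisfies $y\ne x_0$, the hypothesis $\#P_F(X,y;1)<\aleph_\alpha$ for such $y$, and the closure property above; and $\#P_G(X,y;n-1)<\aleph_\alpha$ whenever $y\in G^{n-1}(X)\cap G^{-1}(\{x\})$ and $x\ne x_0$, which follows directly from \eqref{04} and the hypothesis.

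I then estimate $\#P_G(G^{n-1}(X),x_0;n+1)$ in two ways. Using $G^{n+1}=G\circ F$ together with \eqref{GF}, the first estimate above, the hypothesis on $\#P_F(X,y;1)$ for $y\ne x_0$, and the closure property, I obtain $\#P_G(G^{n-1}(X),x_0;n+1)<\aleph_\alpha$. Using $G^{n+1}=F\circ G$ together with \eqref{FG}, the second estimate, and the hypothesis $\#P_F(X,x_0;2)\ge \aleph_\alpha$, I obtain $\#P_G(G^{n-1}(X),x_0;n+1)\ge \aleph_\alpha$---a contradiction. The second assertion, that no roots exist at all once ${\rm Im}(F)=X$ and $F\in \mathcal{F}_{\aleph_\alpha}(X)$, then follows exactly as in parts {\bf (2)} and {\bf (3)}: any root $G$ satisfies ${\rm Im}(G)=X$ and hence $G^{n-1}(X)=X$; the first assertion forces some $\tilde{x}\in X$ with $\#G(\tilde{x})\ge \aleph_\alpha$; choosing $\tilde{y}\in X$ with $\tilde{x}\in G^{n-1}(\tilde{y})$ gives $F(\tilde{y})\supseteq G(\tilde{x})$, contradicting $F\in \mathcal{F}_{\aleph_\alpha}(X)$.

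The main obstacle, and really the only genuinely new ingredient beyond parts {\bf (2)} and {\bf (3)}, is the cardinal-arithmetic closure property invoked above; for $\aleph_\alpha=\aleph_0$ and $\aleph_\alpha=\aleph_1$ this recovers the finite and countable cases already handled, and for higher (regular) cardinals the identical argument goes through, justifying the formulation in terms of arbitrary $\aleph_\alpha$.
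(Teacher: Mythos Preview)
Your proposal is correct and follows essentially the same approach as the paper: the paper's own proof is a single sentence saying that one repeats the argument of part {\bf (2)} of Theorem~\ref{No-root} with the appropriate cardinal-arithmetic closure property, and you have simply spelled this out in detail. One minor point worth noting is that the closure property you invoke (a union of fewer than $\aleph_\alpha$ sets each of size less than $\aleph_\alpha$ has size less than $\aleph_\alpha$) is precisely the regularity of $\aleph_\alpha$, whereas the paper phrases the needed input as $\aleph_\alpha\cdot\aleph_\alpha=\aleph_\alpha$; your formulation is the one that actually drives the strict inequalities in the transcribed steps {\bf (i)}, {\bf (ii)} and the two estimates of $\#P_G(G^{n-1}(X),x_0;n+1)$, and your parenthetical remark about regular cardinals is an honest acknowledgement of this.
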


\begin{proof}
    The proof of  result {\bf (2)} of Theorem \ref{No-root} is based on the fact that a
    finite union of finite sets is finite. The proof of this result is similar, using the result that a union of a collection of cardinality $\aleph_{\alpha}$ of sets of cardinality $\aleph_\alpha$ has cardinality $\aleph_\alpha$.
\end{proof}

As a consequence of the above theorem,  we have the following result, with proof similar to that of Corollary \ref{C1}.
\begin{corollary}\label{C2}
    {\rm (Sets of cardinality at least $\aleph_\alpha$ vs those less than $\aleph_\alpha$)}
     Let $F$ be a multifunction  on $X$ such that ${\rm Dom}(F)=X$ and $x_0 \notin F(x_0)$ for some $x_0\in X$. Further, suppose that $F$ satisfies
    $\#F^{-2}(\{x_0\})\ge \aleph_{\alpha}$ and   $ \#F^{-1}(\{x\})<\aleph_{\alpha}$  for
    all $x\ne x_0$ in $X$  and for some infinite cardinal number $\aleph_\alpha$.     Then $F$ has no iterative roots of order $n\ge 2$ in $\mathcal{F}_{\aleph_{\alpha}}(X)$. If, in addition, $F\in \mathcal{F}_{\aleph_{\alpha}}(X)$ and ${\rm Im}(F)=X$, then $F$ has no iterative roots of order $n\ge 2$ at all.
\end{corollary}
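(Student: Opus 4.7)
My plan is to deduce Corollary \ref{C2} directly from Theorem \ref{Gen-card}, mimicking the derivation of Corollary \ref{C1} from Theorem \ref{No-root}. The bridging facts I need are already recorded in the paper: for any multifunction $F$ on $X$ and any point $x \in X$, one has $\#P_F(X,x;1) = \#F^{-1}(\{x\})$, since each preimage $y$ of $x$ is in bijection with the $1$-path (edge) $(y,x)$, and $\#P_F(X,x;2) \ge \#F^{-2}(\{x\})$, since every $y \in F^{-2}(\{x\})$ admits at least one choice of intermediate vertex $z \in F(y) \cap F^{-1}(\{x\})$, producing a $2$-path $((y,z),(z,x))$, and distinct values of $y$ yield distinct such paths.

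First I would translate the hypotheses of Corollary \ref{C2} into path-count hypotheses: $\#F^{-2}(\{x_0\}) \ge \aleph_\alpha$ gives $\#P_F(X,x_0;2) \ge \aleph_\alpha$, and $\#F^{-1}(\{x\}) < \aleph_\alpha$ for $x \ne x_0$ gives $\#P_F(X,x;1) < \aleph_\alpha$ for $x \ne x_0$. Combined with ${\rm Dom}(F) = X$ and $x_0 \notin F(x_0)$, these are exactly the hypotheses of Theorem \ref{Gen-card}. Invoking that theorem yields immediately that $F$ admits no iterative root of order $n \ge 2$ in $\mathcal{F}_{\aleph_{\alpha}}(X)$. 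Under the extra assumptions $F \in \mathcal{F}_{\aleph_{\alpha}}(X)$ and ${\rm Im}(F) = X$, the second clause of Theorem \ref{Gen-card} then rules out iterative roots of order $n \ge 2$ altogether.

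There is no substantive obstacle here; the argument is a purely formal reduction of the same shape as the one-line proof of Corollary \ref{C1}. The only place where a cardinal-arithmetic subtlety could enter would be inside Theorem \ref{Gen-card} itself, where one needs that a union of $\aleph_\alpha$-many sets of cardinality less than $\aleph_\alpha$ has cardinality at most $\aleph_\alpha$ (a consequence of the axiom of choice), but that is already absorbed into the statement being cited. Accordingly, the proof I would write down is essentially a single sentence: apply the two cardinality comparisons above and invoke Theorem \ref{Gen-card}.
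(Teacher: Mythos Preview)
Your proposal is correct and matches the paper's own approach exactly: the paper simply notes that the proof is ``similar to that of Corollary \ref{C1}'', i.e., use $\#P_F(X,x;2)\ge \#F^{-2}(\{x\})$ and $\#P_F(X,x;1)=\#F^{-1}(\{x\})$ to reduce to Theorem \ref{Gen-card}.
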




\section{Inverse and Pullback multifunctions}\label{Sec3}

To describe the continuity or measurability of single-valued maps, we look at their
inverse images. We may think of them as
multivalued functions. It is clearly a very important class, and we
refer to it as pullback multifunctions. Unfortunately they are not
covered by the results of the previous section due to some trivial reasons
suggesting that we should be analyzing the inverses of general
multifunctions. The graph of the inverse of a multifunction is obtained
simply by reversing arrows/directions of the original graph. Almost
all the results of the previous section translate  to this setting, and we
present them here first without proofs. Then we specialize to pullback
multifunctions and see the implications of these results.

 As in \cite[p.34]{Aubin-Frankowska}, the {\it inverse} of any $F\in \mathcal{F}(X)$ is the multifunction $F^{-1}\in \mathcal{F}(X)$ defined by $x\in F^{-1}(y)$ if $y\in F(x)$. Then $\mathcal{G}_{F^{-1}}=\{(y,x):(x,y)\in \mathcal{G}_F\}$. Furthermore, $F=G_1\circ G_2$ if and only if $F^{-1}=G_2^{-1}\circ G_1^{-1}$ for all multifunctions $G_1, G_2\in \mathcal{F}(X)$ as shown in \cite[p.37]{Aubin-Frankowska}, implying that $F$ has an iterative root of order $n$ in $\mathcal{E}\subseteq \mathcal{F}(X)$ if and only if $F^{-1}$ has an iterative root of order $n$ in $\mathcal{E}^{-1}:=\{H:H^{-1}\in \mathcal{E}\}$. Consequently, we can deduce the following corollary from Theorem \ref{No-root} and Corollary \ref{C1}, where
 \begin{align*}
    \mathcal{F}^{-1}_{M}(X)&:=\{F\in \mathcal{F}(X):F^{-1}\in \mathcal{F}_M(X)\}~ \text{for each}~M\in \mathbb{N}, \\
    \mathcal{F}^{-1}_{\textsf{f}}(X)&:=\{F\in \mathcal{F}(X):F^{-1}\in \mathcal{F}_{\textsf{f}}(X)\}, \\
    \mathcal{F}^{-1}_{\textsf{c}}(X)&:=\{F\in \mathcal{F}(X):F^{-1}\in \mathcal{F}_{\textsf{c}}(X)\},\\ \mathcal{F}^{-1}_{\aleph_\alpha}(X)&:=\{F\in \mathcal{F}(X):F^{-1}\in \mathcal{F}_{\aleph_\alpha}(X)\}~ \text{for each ordinal number}~\alpha,
 \end{align*}
and we assume the axiom of choice in result {\bf (4)}.
\begin{corollary}\label{C3}
     Let $F$ be a multifunction on $X$ such that ${\rm Im}(F)=X$ and $x_0 \notin F(x_0)$ for some $x_0\in X$.
        \begin{enumerate}
        \item[\bf (1)] {\rm (Large finite sets vs small finite sets)}
        Suppose that $F$ satisfies $\# P_F(x_0,X;2)>MN^3$  and   $\# P_F(x,X;1)\le N$  for
        all  $x\ne x_0$ in $X$ and for some $N, M\in \mathbb{N}$.
            %
            %
        Then $F$ has no iterative roots
        of order $n\ge 2$ in $\mathcal{F}^{-1}_{M}(X)$.
        If, in addition, $F\in \mathcal{F}^{-1}_{M}(X)$ and ${\rm Dom}(F)=X$, then $F$ has no iterative roots of order $n\ge 2$ at all.

        \item[\bf (2)] {\rm (Infinite sets vs finite sets)} Suppose that $F$ satisfies  $P_F(x_0,X;2)$ is infinite and  $ P_F(x,X;1)$ is finite for
        all $x\ne x_0$ in $X$.
        Then $F$ has no iterative roots of order $n\ge 2$ in $\mathcal{F}^{-1}_{\textsf{f}}(X)$. If, in addition, $F\in \mathcal{F}^{-1}_{\textsf{f}}(X)$ and ${\rm Dom}(F)=X$, then $F$ has no iterative roots of order $n\ge 2$ at all.

        \item[\bf (3)] {\rm (Uncountable sets vs countable sets)} Suppose that $F$ satisfies  $P_F(x_0,X;2)$ is uncountable and  $ P_F(x,X;1)$ is countable for
        all $x\ne x_0$ in $X$.
        Then $F$ has no iterative roots of order $n\ge 2$ in $\mathcal{F}^{-1}_{\textsf{c}}(X)$. If, in addition, $F\in \mathcal{F}^{-1}_{\textsf{c}}(X)$ and ${\rm Dom}(F)=X$, then $F$ has no iterative roots of order $n\ge 2$ at all.

        \item[\bf (4)]   {\rm (Sets of cardinality at least $\aleph_\alpha$ vs those less than $\aleph_\alpha$)} Suppose that $F$ satisfies $\#P_F(x_0,X;2)\ge \aleph_{\alpha}$ and  $ \#P_F(x,X;1)<\aleph_\alpha$  for
        all $x\ne x_0$ in $X$ and for some infinite cardinal number $\aleph_\alpha$.     Then $F$ has no iterative roots of order $n\ge 2$ in $\mathcal{F}^{-1}_{\aleph_{\alpha}}(X)$. If, in addition, $F\in \mathcal{F}^{-1}_{\aleph_{\alpha}}(X)$ and ${\rm Dom}(F)=X$, then $F$ has no iterative roots of order $n\ge 2$ at all.
    \end{enumerate}
\end{corollary}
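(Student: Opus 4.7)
The plan is to deduce each part of Corollary \ref{C3} by applying the correspondingly-numbered result from Section \ref{Sec2} to the inverse multifunction $F^{-1}$ rather than to $F$ itself. The groundwork for this reduction is already laid in the paragraph preceding the statement: $F$ has an $n$-th iterative root in $\mathcal{E} \subseteq \mathcal{F}(X)$ if and only if $F^{-1}$ has an $n$-th iterative root in $\mathcal{E}^{-1}$. Thus to conclude nonexistence of roots of $F$ in $\mathcal{F}^{-1}_{M}(X)$, $\mathcal{F}^{-1}_{\textsf{f}}(X)$, $\mathcal{F}^{-1}_{\textsf{c}}(X)$, or $\mathcal{F}^{-1}_{\aleph_\alpha}(X)$, it suffices to verify that $F^{-1}$ satisfies the hypotheses of Theorem \ref{No-root} (parts \textbf{(1)}--\textbf{(3)}) or Theorem \ref{Gen-card} (part \textbf{(4)}), since these classes are by definition the ones whose inverse sits in $\mathcal{F}_M(X)$, $\mathcal{F}_{\textsf{f}}(X)$, $\mathcal{F}_{\textsf{c}}(X)$, or $\mathcal{F}_{\aleph_\alpha}(X)$.

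The central combinatorial observation needed is the path-reversal bijection: since $\mathcal{G}_{F^{-1}}$ is obtained from $\mathcal{G}_F$ by reversing every edge, the map that sends a $k$-path $((x,u_1),(u_1,u_2),\ldots,(u_{k-1},y))$ in $\mathcal{G}_F$ to $((y,u_{k-1}),(u_{k-1},u_{k-2}),\ldots,(u_1,x))$ in $\mathcal{G}_{F^{-1}}$ is a bijection. Consequently
\[
\#P_{F^{-1}}(X,x_0;k) = \#P_F(x_0,X;k) \quad \text{and} \quad \#P_{F^{-1}}(X,x;1) = \#P_F(x,X;1)
\]
for every $x \in X$ and every $k$. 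Similarly, $\operatorname{Dom}(F^{-1}) = \operatorname{Im}(F)$, $\operatorname{Im}(F^{-1}) = \operatorname{Dom}(F)$, and $x_0 \in F(x_0) \Leftrightarrow x_0 \in F^{-1}(x_0)$.

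With these translations in hand, the verification is mechanical. For part \textbf{(1)}: the hypothesis $\operatorname{Im}(F) = X$ becomes $\operatorname{Dom}(F^{-1}) = X$; the condition $x_0 \notin F(x_0)$ becomes $x_0 \notin F^{-1}(x_0)$; and the path inequalities $\#P_F(x_0,X;2) > MN^3$ and $\#P_F(x,X;1) \leq N$ become $\#P_{F^{-1}}(X,x_0;2) > MN^3$ and $\#P_{F^{-1}}(X,x;1) \leq N$ respectively. So part \textbf{(1)} of Theorem \ref{No-root} applied to $F^{-1}$ shows that $F^{-1}$ admits no $n$-th root ($n\geq 2$) in $\mathcal{F}_M(X)$; taking inverses gives the first conclusion for $F$. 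For the second statement, the added hypothesis $F \in \mathcal{F}^{-1}_M(X)$ translates to $F^{-1} \in \mathcal{F}_M(X)$, and $\operatorname{Dom}(F) = X$ translates to $\operatorname{Im}(F^{-1}) = X$, matching the hypotheses of the second part of Theorem \ref{No-root}\textbf{(1)}. Parts \textbf{(2)}, \textbf{(3)}, and \textbf{(4)} are handled identically, appealing respectively to parts \textbf{(2)}, \textbf{(3)} of Theorem \ref{No-root} and to Theorem \ref{Gen-card}.

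There is essentially no hard step: the proof is a straightforward duality argument, and the only care required is bookkeeping, specifically making sure that the path-reversal bijection respects cardinalities at every stage and that the labeling of classes $\mathcal{F}^{-1}_\bullet(X)$ aligns with their defining inverse condition. Accordingly, I would present the proof as a brief sentence naming the reduction and stating that each part follows from the corresponding earlier theorem applied to $F^{-1}$, in the spirit of the one-sentence proof the authors give for Corollary \ref{C1}.
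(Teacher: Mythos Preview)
Your proposal is correct and matches the paper's approach exactly: the paper derives Corollary~\ref{C3} by noting that $\mathcal{G}_{F^{-1}}$ is obtained from $\mathcal{G}_F$ by reversing edges and that $F$ has an $n$-th root in $\mathcal{E}$ if and only if $F^{-1}$ has one in $\mathcal{E}^{-1}$, then invoking the earlier theorems for $F^{-1}$. If anything, your write-up is slightly more careful than the paper's in explicitly naming Theorem~\ref{Gen-card} for part~\textbf{(4)} and spelling out the translation of each hypothesis.
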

As a consequence of the above corollary, analogous to the respective Corollaries \ref{C1} and \ref{C2} of Theorems \ref{No-root} and \ref{Gen-card}, we have the following.

\begin{corollary}\label{C4}
     Let $F$ be a multifunction on $X$ such that ${\rm Im}(F)=X$ and $x_0 \notin F(x_0)$ for some $x_0\in X$.
        \begin{enumerate}
        \item[\bf (1)]  {\rm (Large finite sets vs small finite sets)}
      Suppose that $F$ satisfies $\# F^2(x_0)>MN^3$  and   $\# F(x)\le N$  for
        all  $x\ne x_0$ in $X$ and for some  $N, M\in \mathbb{N}$.
            %
            %
        Then $F$ has no iterative roots
        of order $n\ge 2$ in $\mathcal{F}^{-1}_{M}(X)$.
        If, in addition, $F\in \mathcal{F}^{-1}_{M}(X)$ and ${\rm Dom}(F)=X$, then $F$ has no iterative roots of order $n\ge 2$ at all.

        \item[\bf (2)] {\rm (Infinite sets vs finite sets)}  Suppose that $F$ satisfies  $ F^{2}(x_0)$ is infinite and  $ F(x)$ is finite for
        all $x\ne x_0$ in $X$.
        Then $F$ has no iterative roots of order $n\ge 2$ in $\mathcal{F}^{-1}_{\textsf{f}}(X)$. If, in addition, $F\in \mathcal{F}^{-1}_{\textsf{f}}(X)$ and ${\rm Dom}(F)=X$, then $F$ has no iterative roots of order $n\ge 2$ at all.

        \item[\bf (3)] {\rm (Uncountable sets vs countable sets)}  Suppose that $F$ satisfies   $ F^{2}(x_0)$ is uncountable and  $ F(x)$ is countable for
        all $x\ne x_0$ in $X$.  Then $F$ has no iterative roots of order $n\ge 2$ in $\mathcal{F}^{-1}_{\textsf{c}}(X)$. If, in addition, $F\in \mathcal{F}^{-1}_{\textsf{c}}(X)$ and ${\rm Dom}(F)=X$, then $F$ has no iterative roots of order $n\ge 2$ at all.

            \item[\bf (4)] {\rm (Sets of cardinality at least $\aleph_\alpha$ vs those less than $\aleph_\alpha$)} Suppose that $F$ satisfies $ \#F^2(x_0)\ge \aleph_{\alpha}$ and   $ \#F(x)<\aleph_{\alpha}$  for
            all $x\ne x_0$ in $X$ and for some infinite cardinal number $\aleph_\alpha$.   Then $F$ has no iterative roots of order $n\ge 2$ in $\mathcal{F}^{-1}_{\aleph_{\alpha}}(X)$. If, in addition, $F\in \mathcal{F}^{-1}_{\aleph_{\alpha}}(X)$ and ${\rm Dom}(F)=X$, then $F$ has no iterative roots of order $n\ge 2$ at all.
    \end{enumerate}
\end{corollary}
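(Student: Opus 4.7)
The plan is to deduce this corollary directly from Corollary \ref{C3}, in exact parallel to the way Corollary \ref{C1} is deduced from Theorem \ref{No-root}. The only new ingredient is a pair of cardinal comparisons between image counts and path counts at a single vertex.

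First I would record that for every $x\in X$ the map $(x,y)\mapsto y$ is a bijection from $P_F(x,X;1)$ onto $F(x)$, so $\#P_F(x,X;1)=\#F(x)$. Second, the map from $P_F(x_0,X;2)$ to $F^2(x_0)$ sending a $2$-path $((x_0,u_1),(u_1,u_2))$ to its endpoint $u_2$ is surjective, because $F^2(x_0)=\bigcup_{y\in F(x_0)}F(y)$; picking for each $z\in F^2(x_0)$ some $y\in F(x_0)$ with $z\in F(y)$ then yields an injection $F^2(x_0)\hookrightarrow P_F(x_0,X;2)$, and hence $\#P_F(x_0,X;2)\ge \#F^2(x_0)$.

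Armed with these two inequalities, each of the hypotheses in parts {\bf (1)}--{\bf (4)} on the sets $F^2(x_0)$ and $F(x)$ immediately implies the corresponding hypothesis in parts {\bf (1)}--{\bf (4)} of Corollary \ref{C3} on the path sets $P_F(x_0,X;2)$ and $P_F(x,X;1)$. The ambient classes $\mathcal{F}^{-1}_M(X)$, $\mathcal{F}^{-1}_{\textsf{f}}(X)$, $\mathcal{F}^{-1}_{\textsf{c}}(X)$, $\mathcal{F}^{-1}_{\aleph_\alpha}(X)$ and the side condition ${\rm Dom}(F)=X$ are identical in both statements, so the conclusions follow by a direct appeal to Corollary \ref{C3}.

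Since the substantive content has already been absorbed into Theorem \ref{No-root} and its inverse counterpart Corollary \ref{C3}, there is no genuine obstacle here. The only mildly delicate point is part {\bf (4)}, where constructing the injection $F^2(x_0)\hookrightarrow P_F(x_0,X;2)$ uses the axiom of choice to pick preimages; however, the axiom of choice is already in force in that part, so no additional hypothesis is incurred.
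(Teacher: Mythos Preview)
Your proposal is correct and matches the paper's approach exactly: the paper states that Corollary~\ref{C4} follows from Corollary~\ref{C3} ``analogous to the respective Corollaries~\ref{C1} and~\ref{C2} of Theorems~\ref{No-root} and~\ref{Gen-card},'' which amounts precisely to invoking the identities $\#P_F(x,X;1)=\#F(x)$ and $\#P_F(x,X;k)\ge\#F^k(x)$ already recorded in Section~\ref{Sec2}. Your added remark on the axiom of choice in part~{\bf (4)} is a welcome clarification the paper leaves implicit.
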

The examples below illustrate the above results.
\begin{example}
    Let $F=F_1^{-1}$ (see Figure \ref{Fig4}), where $F_1$ is the map considered in Example \ref{E1}. Then it is easy to see that
            \begin{eqnarray*}
        \#P_F(X,x;1)=\left\{\begin{array}{cl}
        2&\text{if}~x\in \{x_{-2}^{(1)},x_{-2}^{(2)}\},\\
        1   &\text{otherwise}
        \end{array}\right. \quad \text{and}\quad    \#P_F(x,X;1)=\left\{\begin{array}{cl}
        4&\text{if}~x=x_0,\\
        1   &\text{otherwise}.
    \end{array}\right.
    \end{eqnarray*}
If we wish to apply result {\bf (1)} of Theorem \ref{No-root} to $F$, then we must consider $M$ and $N$ such that $M\ge 4$ and $N\ge 2$; but, in that case, there is no $x\in X$ such that the condition {\bf (1.a)} in result {\bf (1)} is satisfied. Therefore result {\bf (1)} of Theorem \ref{No-root} does not apply to $F$ directly.
Additionally, since $P_F(X,x;2)$ is finite for all $x\in X$, neither result {\bf (2)} nor {\bf (3)} of Theorem \ref{No-root} apply. Consequently, the results of Corollary \ref{C1} do not work either. However, $F$ has no iterative roots of order $n\ge 2$ by result {\bf (1)} of  Corollary \ref{C2}  with $N=1$ and $M=2$.
\end{example}
\begin{figure}[h]
    \centering
    \includegraphics[scale=0.6]{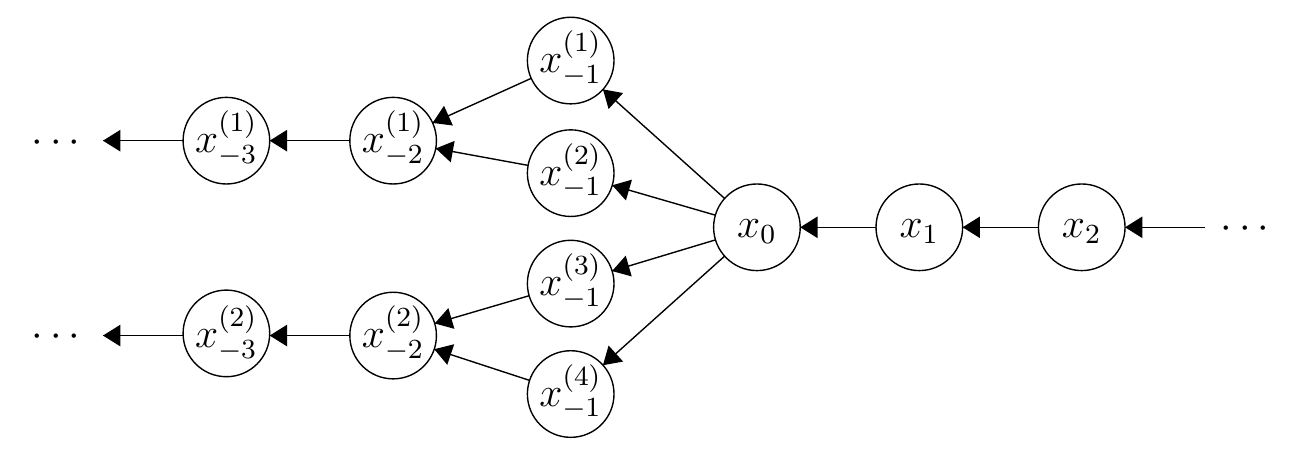}
    \caption{$F_1^{-1}$}
    \label{Fig4}
\end{figure}

\begin{example}
        {\rm Consider the multifunction $F:[0,1]\to 2^{[0,1]}$ defined by
            \begin{eqnarray*}
                F(x)=\left\{\begin{array}{cll}
                \{4x\}&\text{if}&x\in [0,\frac{1}{4}),\\
            \text{$[$}\frac{1}{2},\frac{3}{4}\text{$]$} &\text{if} &x=\frac{1}{4}, \\
                \{\frac{1}{3}(4x-1)\}   &\text{if}&x\in(\frac{1}{4},1]
                \end{array}\right.
            \end{eqnarray*}
            with exactly one set-value point $c=1/4$    (see Figure \ref{Fig5}). Then $F$ has no iterative roots of order $n\ge 2$ by result {\bf (2)} of  Corollary \ref{C3} (or Corollary \ref{C4}) with $x_0=1/4$.}
    \end{example}
    \begin{figure}[h]
\centering
\includegraphics[scale=1]{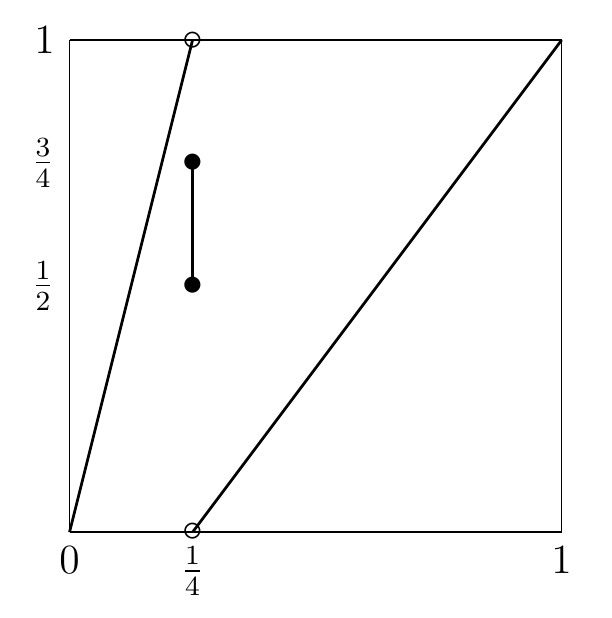}
    \caption{$F$}
    \label{Fig5}
\end{figure}

 So far, we have been looking at iterative roots of inverses of general multifunctions. We now restrict ourselves to a subclass of such multifunctions that arise from single-valued maps for further investigation. In what follows, let
 $\mathfrak{F}(X)$ denote the set of all maps $f:X\to X$, and for each set $A\subseteq X$, $f\in \mathfrak{F}(X)$ and $k\ge 1$, let  $f^{-k}(A)$ denote the $k$-th order {\it inverse image} of $A$ by  $f$ defined by   $f^{-k}(A)=\{x\in X:f^k(x)\in A\}$. An  $F\in \mathcal{F}(X)$ is said to be {\bf (i)} the {\it pullback} of a map $f\in \mathfrak{F}(X)$ if $F(x)=f^{-1}(\{x\})$ for all $x\in X$; {\bf (ii)} a {\it pullback multifunction} if it is the pullback of some $f\in \mathfrak{F}(X)$. Let $ \mathcal{F}_{\textsf{p}}(X)$ denote the set of all pullback multifunctions in $\mathcal{F}(X)$.
The following proposition provides a characterization of such
multifunctions.
\begin{proposition}\label{L1}
    Let $F\in \mathcal{F}(X)$ such that ${\rm Dom}(F)=X$. Then $F\in  \mathcal{F}_{\textsf{p}}(X)$  if and only if the following conditions are satisfied:
    \begin{enumerate}
        \item[\bf (a)] $F(x)\cap F(y)=\emptyset$ for $x\ne y$ in $X$;
        \item[\bf (b)] ${\rm Im}(F)=X$.
    \end{enumerate}
\end{proposition}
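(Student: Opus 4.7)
The proof naturally splits into the two implications, and neither direction presents a real obstacle. My plan is to handle the forward direction directly from the definition of pullback, and the backward direction by constructing the candidate single-valued map pointwise using conditions (a) and (b).

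For the forward direction, suppose $F$ is the pullback of some $f \in \mathfrak{F}(X)$, so $F(x) = f^{-1}(\{x\})$ for every $x \in X$. Then (a) is immediate, since preimages of disjoint singletons under a single-valued map are disjoint: for $x \neq y$,
\[
F(x) \cap F(y) = f^{-1}(\{x\}) \cap f^{-1}(\{y\}) = f^{-1}(\{x\} \cap \{y\}) = \emptyset.
\]
For (b), I compute
\[
{\rm Im}(F) = \bigcup_{x \in X} F(x) = \bigcup_{x \in X} f^{-1}(\{x\}) = f^{-1}(X) = X,
\]
using that $f$ maps into $X$.

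For the converse, assume (a) and (b) together with ${\rm Dom}(F) = X$. The key idea is that (a) and (b) together say exactly that the family $\{F(x) : x \in X\}$ partitions $X$ (with nonempty blocks, by the domain hypothesis). So for each $y \in X$, condition (b) guarantees the existence of some $x \in X$ with $y \in F(x)$, and condition (a) guarantees its uniqueness. I define $f : X \to X$ by setting $f(y)$ equal to this unique $x$. By construction, $y \in F(x)$ if and only if $f(y) = x$, which gives $f^{-1}(\{x\}) = F(x)$ for every $x \in X$. Hence $F$ is the pullback of $f$, and thus $F \in \mathcal{F}_{\textsf{p}}(X)$.

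There is no real obstacle; the only subtlety worth flagging is the role of each hypothesis. Condition (a) ensures well-definedness of $f$, condition (b) ensures $f$ is defined at every $y \in X$, and the standing assumption ${\rm Dom}(F) = X$ corresponds to surjectivity of the resulting $f$ (equivalently, to the partition having no empty blocks).
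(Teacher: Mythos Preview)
Your proof is correct and follows essentially the same approach as the paper: in both directions you argue exactly as the paper does, with only cosmetic differences in presentation (you compute ${\rm Im}(F)=f^{-1}(X)=X$ via a union, while the paper argues element-wise). Your closing remark about the role of the hypothesis ${\rm Dom}(F)=X$ is a nice bonus observation not made explicit in the paper.
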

\begin{proof}
    Suppose that $F$ is the pullback of an $f\in \mathfrak{F}(X)$. Then $F$ clearly satisfies condition {\bf (a)} because $f^{-1}(\{x\})\cap f^{-1}(\{y\})=\emptyset$ for $x\ne y$ in $X$. Further, since $f:X\to X$ is a map, for each $x\in X$ there exists a unique $y\in X$ such that 
    $x\in f^{-1}(\{y\})=F(y)\subseteq F(X)$, implying that $X\subseteq F(X)$. The reverse inclusion follows trivially. Therefore $F$ satisfies condition {\bf (b)}.

    Conversely, assume that $F$ satisfies conditions {\bf (a)} and {\bf (b)}. Define a map $f:X\to X$ by $f(x)=y$ if $x\in F(y)$. Since for each $x\in X$ there exists a unique $y\in X$ such that $x\in F(y)$, where the existence is guaranteed  by condition {\bf (b)} and uniqueness by condition {\bf (a)},  $f$ is clearly  well-defined. Further, it is easy to check that $F(x)=f^{-1}(\{x\})$ for all $x\in X$. Therefore $F$ is the pullback of $f$.
\end{proof}

We remind that Theorem \ref{No-root} (resp. Corollary \ref{C1}) does not directly apply to pullback multifunctions $F$ because the assumption in this result that the number of $2$-paths ending at $x_0$ in $\mathcal{G}_F$ (resp. the number of points whose second order image contains $x_0$) is very large implies that the necessary condition {\bf (a)} of Proposition \ref{L1} for $F$ to be a pullback multifunction is not satisfied. However, we can certainly use Corollaries \ref{C3} and \ref{C4} for $F$,  the results derived from Theorem \ref{No-root} and Corollary \ref{C1}, respectively.
It is also worth noting that the class of pullback multifunctions is closed under the operation of taking iterative roots, as shown in the following.
\begin{proposition}\label{L2}
    Let $F\in   \mathcal{F}_{\textsf{p}}(X)$  such that $F(x)=(G_1\circ G_2)(x)$ for all $x\in X$ and some multifunctions $G_1, G_2\in \mathcal{F}(X)$, where ${\rm Dom}(F)={\rm Dom}(G_1)={\rm Dom}(G_2)=X$. Then {\bf (i)} ${\rm Im}(G_1)=X$ and {\bf (ii)} $G_2(x)\cap G_2(y)=\emptyset$ for $x\ne y$ in $X$. Further, if $F(x)=G^n(x)$ for all $x\in X$ and some multifunction $G\in \mathcal{F}(X)$ such that ${\rm Dom}(G)=X$, where $n\ge 2$, then $G\in     \mathcal{F}_{\textsf{p}}(X)$.
\end{proposition}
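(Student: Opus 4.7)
The plan is to exploit the characterization of pullback multifunctions from Proposition \ref{L1}: namely, $F\in \mathcal{F}_{\textsf{p}}(X)$ iff $\mathrm{Dom}(F)=X$, $\mathrm{Im}(F)=X$, and the images $F(x)$ for distinct $x\in X$ are pairwise disjoint. So I would translate every claim about pullbacks into a disjointness statement plus a surjectivity statement on images, and read these off the factorization $F=G_1\circ G_2$.

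For part \textbf{(i)}, I would simply chase images: since $\mathrm{Im}(F)=X$ by Proposition \ref{L1}\textbf{(b)} and $F(x)=G_1(G_2(x))\subseteq G_1(X)=\mathrm{Im}(G_1)$ for every $x$, taking the union over $x\in X$ gives $X=\mathrm{Im}(F)\subseteq \mathrm{Im}(G_1)\subseteq X$. For part \textbf{(ii)}, I would argue contrapositively: if there were distinct $x,y\in X$ with some $z\in G_2(x)\cap G_2(y)$, then since $\mathrm{Dom}(G_1)=X$ we could pick any $w\in G_1(z)$, and $w$ would land simultaneously in $G_1(G_2(x))=F(x)$ and $G_1(G_2(y))=F(y)$. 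This violates condition \textbf{(a)} of Proposition \ref{L1}, contradicting $F\in \mathcal{F}_{\textsf{p}}(X)$.

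For the closing ``Further'' assertion, the trick is to factor $G^n$ in two different ways and apply the two parts already proved. First, I would remark that $\mathrm{Dom}(G)=X$ forces $\mathrm{Dom}(G^k)=X$ for every $k\ge 1$ by a one-line induction (a union of nonempty sets over a nonempty index set is nonempty). Then, writing $F=G\circ G^{n-1}$ and taking $G_1=G$, $G_2=G^{n-1}$ in part \textbf{(i)}, I conclude $\mathrm{Im}(G)=X$. Writing $F=G^{n-1}\circ G$ instead and taking $G_1=G^{n-1}$, $G_2=G$ in part \textbf{(ii)}, I conclude $G(x)\cap G(y)=\emptyset$ for $x\ne y$. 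Together with $\mathrm{Dom}(G)=X$, this verifies both hypotheses of the converse direction of Proposition \ref{L1}, so $G\in \mathcal{F}_{\textsf{p}}(X)$.

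There is no serious obstacle here; the only subtle point is justifying that $\mathrm{Dom}(G^{n-1})=X$ so that part \textbf{(i)} may actually be invoked (and analogously for the other decomposition). Once that is in place, everything reduces to two short set-theoretic arguments followed by symmetrically applying the already-established \textbf{(i)} and \textbf{(ii)} to the two factorizations $G^n=G\circ G^{n-1}=G^{n-1}\circ G$. The overall proof should fit in well under a page.
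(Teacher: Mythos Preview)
Your proposal is correct and follows essentially the same route as the paper: prove \textbf{(i)} by the image-chase $X=\mathrm{Im}(F)=G_1(G_2(X))\subseteq G_1(X)$, prove \textbf{(ii)} by noting that any $z\in G_2(x)\cap G_2(y)$ yields $\emptyset\ne G_1(z)\subseteq F(x)\cap F(y)$, and deduce the ``Further'' claim by applying \textbf{(i)} and \textbf{(ii)} to the two factorizations $G^n=G\circ G^{n-1}=G^{n-1}\circ G$ together with Proposition~\ref{L1}. Your explicit check that $\mathrm{Dom}(G^k)=X$ for all $k$ is a detail the paper leaves implicit.
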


\begin{proof}
    Since $X=F(X)=G_1(G_2(X))\subseteq G_1(X)$, the result {\bf (i)} is trivial. If $z\in G_2(x)\cap G_2(y)$ for some $x\ne y$ in $X$, then $\emptyset \ne G_1(z)\subseteq F(x)\cap F(y)$, which  is a contradiction. Therefore result {\bf (ii)} follows. The second part follows from the first and the `if' part of Proposition \ref{L1}, because $F(x)=(G\circ G^{n-1})(x)=(G^{n-1}\circ G)(x)$ for all $x\in X$.
\end{proof}
The discussion just before Corollary \ref{C3} shows that a multifunction $F\in \mathcal{F}_{\textsf{p}}(X)$ with domain $X$, which is a pullback of an $f\in \mathfrak{F}(X)$, has an iterative root of order $n$ on $X$ if and only if $f$ has an iterative root of order $n$ in $\mathfrak{F}(X)$. More precisely, it is easy to check the following.
\begin{theorem}\label{pullback}
    Let $F\in \mathcal{F}_{\textsf{p}}(X)$ be the pullback of an $f\in \mathfrak{F}(X)$ such that ${\rm Dom}(F)=X$. Then $G$ is an iterative root of $F$ of order $n$ in $\mathcal{F}_{\textsf{p}}(X)$ if and only if $g$ is an iterative root of $f$ of order $n$ in $\mathfrak{F}(X)$, where $G$ is the pullback of $g$.
\end{theorem}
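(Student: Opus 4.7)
The plan is to reduce the equality $G^{n}=F$ of multifunctions to the equality $g^{n}=f$ of single-valued maps via the inverse-multifunction formalism already introduced in Section \ref{Sec3}. First I would identify every single-valued map $h\in \mathfrak{F}(X)$ with the multifunction $\tilde h\in \mathcal{F}(X)$ defined by $\tilde h(x)=\{h(x)\}$. A direct unwinding of the definitions of multifunction composition and of the inverse multifunction shows that $\tilde h_{1}\circ \tilde h_{2}=\widetilde{h_{1}\circ h_{2}}$, hence $\tilde h^{k}=\widetilde{h^{k}}$ for all $k\ge 1$, and also that the pullback of $h$ coincides with $\tilde h^{-1}$. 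Under the hypotheses of the theorem this yields $F=\tilde f^{-1}$ and $G=\tilde g^{-1}$.

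Next I would apply the inversion rule $(H_{1}\circ H_{2})^{-1}=H_{2}^{-1}\circ H_{1}^{-1}$ recorded just before Corollary \ref{C3}, iterated $n-1$ times, to obtain
\[
G^{n}=(\tilde g^{-1})^{n}=(\tilde g^{n})^{-1}=(\widetilde{g^{n}})^{-1},
\]
so that $G^{n}$ is itself the pullback of $g^{n}$. The equation $G^{n}=F$ thereby becomes the equality of two pullback multifunctions, those of $g^{n}$ and of $f$.

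Finally I would invoke the injectivity of the assignment $h\mapsto \tilde h^{-1}$ from $\mathfrak{F}(X)$ to $\mathcal{F}_{\textsf{p}}(X)$: the construction in the proof of Proposition \ref{L1} recovers $h$ from its pullback by declaring $h(x)$ to be the unique $y$ with $x\in h^{-1}(\{y\})$, whose existence and uniqueness come from conditions \textbf{(a)} and \textbf{(b)} of that proposition. Consequently the pullbacks of $g^{n}$ and $f$ coincide if and only if $g^{n}=f$, which is exactly the stated equivalence. No serious obstacle is expected; the only points requiring any care are the two routine compatibilities $\widetilde{h_{1}\circ h_{2}}=\tilde h_{1}\circ \tilde h_{2}$ and $(\tilde g^{-1})^{n}=(\tilde g^{n})^{-1}$, both of which follow directly from the definitions adopted in Section \ref{Sec2}.
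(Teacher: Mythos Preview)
Your proposal is correct and follows precisely the route the paper indicates: the paper does not write out a proof but only remarks that the theorem is ``easy to check'' from the inversion formalism recorded just before Corollary~\ref{C3}, namely that $H_{1}\circ H_{2}=K$ iff $H_{2}^{-1}\circ H_{1}^{-1}=K^{-1}$, together with the identification of a pullback as the inverse of the corresponding single-valued map viewed as a multifunction. Your argument is exactly this spelled out in full, with the injectivity of the pullback correspondence (which indeed follows immediately from the reconstruction in Proposition~\ref{L1}) supplying the final step.
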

Thus, pullback multifunctions provide numerous examples of
multifunctions with or without iterative roots, based on the known
results on iterative roots for single-valued maps. An easy example
is that the $4$-th root multifunction $z\mapsto z^{1/4}$ has the
square root multifunction $z\mapsto z^{1/2}$ as its iterative square
root on $\mathbb{C}$ because the square function $z\mapsto z^2$ is
an iterative square root of the $4$-th power function $z\mapsto
z^4$.  
We now give a brief summary of known results on iterative roots of pullbacks of complex polynomials.
\begin{corollary}\label{C5}
    \begin{enumerate}
        \item[\bf (1)] The pullback of any complex quadratic polynomial has no iterative roots of order $n\ge 2$ on $\mathbb{C}$. In particular, the square root multifunction $z\mapsto z^{1/2}$ has no iterative roots of order $n\ge 2$ on $\mathbb{C}$.

        \item[\bf (2)] The $d$-th root multifunction $z\mapsto z^{1/d}$ has no iterative roots of order $n\ge 2$ on $\mathbb{C}$ provided $d^p \not\equiv d({\rm mod}~p^2)$ for all primes $p\le d$.  The first $25$ such $d$ are precisely $2, 3,6,11,14,15,34,39,47,58,59,66,83,86,87,95,102,103,106,111,114,119,123,139$ and $142$.

        \item[\bf (3)] Let $F$ be the pullback of a complex cubic  polynomial $f$, where $f$ is not linearly conjugate to $p(z)=z^3-z^2+z$ (i.e., $f=h\circ p\circ h^{-1}$ for no linear function $h(z)=\alpha z+\beta,~~\alpha\ne 0$ on $\mathbb{C}$) and has less than three distinct fixed points.  Then $F$ has no iterative roots of order $n\ge 2$ on $\mathbb{C}$.

        \item[\bf (4)] Let $F$ be the pullback of a complex polynomial  of degree $d\ge 2$.  Then $F$ has no iterative roots of order $n>d(d-1)$ on $\mathbb{C}$.

        \item[\bf (5)] Let $F$ be the pullback of a complex polynomial  of degree $d\ge 2$. If $n>d$ is a prime, then $F$ has no iterative roots of order $n$ on $\mathbb{C}$.
    \end{enumerate}
\end{corollary}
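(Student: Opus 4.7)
The plan is to reduce each of the five claims to a corresponding known statement about iterative roots of complex polynomials, with Theorem \ref{pullback} serving as the bridge. By the fundamental theorem of algebra, every non-constant complex polynomial $f$ is surjective onto $\mathbb{C}$, so Proposition \ref{L1} certifies that $F := f^{-1}$, viewed as a multifunction, lies in $\mathcal{F}_{\textsf{p}}(\mathbb{C})$ with $\mathrm{Dom}(F)=\mathbb{C}$. Moreover, by Proposition \ref{L2}, any iterative root $G$ of $F$ in $\mathcal{F}(\mathbb{C})$ having full domain automatically belongs to $\mathcal{F}_{\textsf{p}}(\mathbb{C})$. Consequently, Theorem \ref{pullback} supplies the full equivalence: $F$ admits an iterative root of order $n$ in $\mathcal{F}(\mathbb{C})$ if and only if $f$ admits an iterative root of order $n$ in $\mathfrak{F}(\mathbb{C})$.

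With this reduction in hand, each item of the corollary becomes a citation to the literature on single-valued iterative roots. For \textbf{(1)}, one invokes the known fact that no complex quadratic polynomial admits an iterative root of order $n\geq 2$ in $\mathfrak{F}(\mathbb{C})$; since $z\mapsto z^{1/2}$ is the pullback of $z\mapsto z^2$, the ``in particular'' clause is the corresponding special case. For \textbf{(2)}, the power map $z\mapsto z^d$ is shown in \cite{BGpreprint} to admit no iterative root of order $n\geq 2$ in $\mathfrak{F}(\mathbb{C})$ whenever the Wieferich-type congruence $d^p\not\equiv d\pmod{p^2}$ holds for every prime $p\leq d$; the list of the first twenty-five admissible $d$ is then a routine numerical verification of that congruence. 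Items \textbf{(3)}, \textbf{(4)}, and \textbf{(5)} follow analogously from, respectively, the classification of cubic polynomials lacking iterative roots, the $d(d-1)$ upper bound on the order of any iterative root of a degree-$d$ polynomial, and the prime-order non-existence theorem for polynomials of degree $d\geq 2$, all established in \cite{BG, BGpreprint}.

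There is no genuine technical obstacle; the only conceptual subtlety to handle is verifying that restricting the search for iterative roots from $\mathcal{F}(\mathbb{C})$ to the pullback subclass $\mathcal{F}_{\textsf{p}}(\mathbb{C})$ does not discard any solutions. This is precisely the content of Proposition \ref{L2}, which ensures that any full-domain iterative root of a pullback is itself a pullback. Once this point is acknowledged, Corollary \ref{C5} is a catalogue of existing polynomial results translated through the pullback correspondence, so no fresh technical work is required in the proof itself beyond citing the appropriate single-valued statements.
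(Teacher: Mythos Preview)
Your reduction via Theorem \ref{pullback}, supported by Propositions \ref{L1} and \ref{L2}, to known single-valued results for complex polynomials is precisely the paper's own argument. The only correction needed is bibliographic: the polynomial facts you invoke are not in \cite{BG, BGpreprint} but are classical---item (1) is due to \cite{rice1980} (or \cite{choczewski1992}), item (2) to \cite{Solar1976} with the numerical list from \cite{riesel1964}, item (3) to \cite{choczewski1992}, item (4) to \cite{rice1980}, and item (5) to \cite{choczewski1992}.
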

\begin{proof}
    Follows from the following known results on complex polynomials, respectively, and the
    discussions above.
    \begin{enumerate}
        \item[\bf (i)] (\cite[Theorem 1]{rice1980} or \cite[Theorem 2]{choczewski1992}) Complex quadratic polynomials have no iterative roots of any order $n\ge 2$ on $\mathbb{C}$. In particular, the square function $z\mapsto z^2$ has no iterative roots of any order $n\ge 2$ on $\mathbb{C}$.

        \item[\bf (ii)] (\cite{Solar1976}) The polynomial function $z\mapsto z^{d}$ has no iterative roots of order $n\ge 2$ on $\mathbb{C}$ provided $d^p \not\equiv d({\rm mod}~p^2)$ for all primes $p\le d$. The first $25$ such $d$ are those listed above in result {\bf (2)}, as shown in \cite{riesel1964}.

        \item[\bf (iii)] (\cite[Theorem 6]{choczewski1992}) Let $f$ be a complex cubic  polynomial which is not linearly conjugate to $p(z)=z^3-z^2+z$ and has less than three distinct fixed points.  Then $f$ has no iterative roots of order $n\ge 2$ on $\mathbb{C}$.

            \item[\bf (iv)] (\cite[Theorem 4]{rice1980})  Let $f$ be a complex polynomial of degree $d\ge 2$. Then $f$ has no iterative roots of order $n>d(d-1)$ on $\mathbb{C}$.

        \item[\bf (v)] (\cite[Theorem 1]{choczewski1992}) Let $f$ be a complex polynomial of degree $d\ge 2$. If $n>d$ is a prime, then $f$ has no iterative roots of order $n$ on $\mathbb{C}$.
    \end{enumerate}
\end{proof}
In this circle of ideas, we now make some general remarks, for which we require some additional terminology and notation.
 A fixed point $x\in X$ of an $f\in \mathfrak{F}(X)$ is said to be {\it isolated} if
 there is no $y\ne x$ in $X$ such that $f(y)=x$.
Let $T_{x}(f):=f^{-1}(\{x\})\setminus \{x\}$ for each fixed point
$x$ of $f$, with $T_{x}(f)$ being nonempty if and only if $x$ is
non-isolated. It is important to note that Theorem 4 in
\cite{rice1980} for complex polynomials, cited above is a
consequence of the general result in that paper, which is stated as
follows.
 \begin{theorem}{\rm (Lemma 11 in \cite{rice1980})} Let $f\in \mathfrak{F}(X)$ has a non-isolated fixed point $x\in X$ such that  $f^{-1}(\{y\})\ne \emptyset$ for some $y\in T_{x}(f)$. Then $f$ has no iterative roots of order $n>L$, where  $L:=\#(\cup_{x'}T_{x'}(f))$ with the union taken over all non-isolated fixed points $x'$ of $f$.
 \end{theorem}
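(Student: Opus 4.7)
Suppose for contradiction that $g$ is an $n$-th iterative root of $f$ with $n > L$. The plan is to produce $n$ distinct elements of $T := \bigcup_{x'} T_{x'}(f)$ from the forward $g$-orbit of $z$, contradicting $\#T = L$. Writing $u_i := g^i(z)$, the guiding observation is that since $f = g^n$ commutes with $g$, one has $f(u_i) = g^n(u_i) = u_{n+i}$ for every $i \ge 0$; moreover $u_n = f(z) = y$ and $u_{2n} = f(y) = x$.

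I would then single out the cycle $C$ formed by the $g$-orbit of $x$. Since $g^n(x) = f(x) = x$, the length of $C$ divides $n$, so $f$ fixes every point of $C$ and $g$ preserves $C$. Let $k_z$ be the smallest index $i$ with $u_i \in C$; this exists because $u_{2n} = x \in C$, so $k_z \le 2n$. Crucially, $u_n = y$ cannot lie in $C$: elements of $C$ are $f$-fixed points, but $f(y) = x \ne y$. This forces $k_z \ge n+1$.

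The heart of the argument is the claim that $u_i \in T$ for every $i$ in the window $k_z - n \le i \le k_z - 1$. In this window $u_{n+i} \in C$ is an $f$-fixed point, while $u_i \notin C$ forces $u_i \ne u_{n+i} = f(u_i)$; thus $u_{n+i}$ is a non-isolated $f$-fixed point with $u_i$ a distinct preimage, so $u_i \in T_{u_{n+i}}(f) \subseteq T$. The window contains exactly $n$ consecutive indices, and the points $u_0, u_1, \ldots, u_{k_z-1}$ must be pairwise distinct, since any repetition inside this segment would confine the subsequent orbit to a subset of $X \setminus C$, contradicting $u_{2n} \in C$. This produces $n$ distinct elements of $T$ and the desired contradiction $L \ge n$. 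I expect the most delicate point to be pinning down $k_z > n$, which relies on $y$ being a proper (non-fixed) preimage of $x$; the existence of $z$ with $f(z) = y$ then extends the $g$-orbit far enough to guarantee a window of full length $n$ rather than only $k_y \le n$.
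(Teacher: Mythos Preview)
The paper does not actually supply a proof of this statement: it is quoted verbatim as Lemma~11 from \cite{rice1980} and left unproved, serving only as background for the paper's own Theorem~\ref{non-isolated}. So there is no in-paper argument to compare your proposal against.

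Your argument is nonetheless correct. The key steps all check out: since $g^n(x)=x$, the $g$-orbit $C$ of $x$ is a cycle of length dividing $n$ and hence consists of $f$-fixed points; $u_n=y$ is not $f$-fixed, so the first hitting time $k_z$ of $C$ satisfies $n+1\le k_z\le 2n$; for each $i$ in the window $k_z-n\le i\le k_z-1$ one has $u_{n+i}\in C$ a fixed point and $u_i\notin C$ a distinct preimage, so $u_i\in T_{u_{n+i}}(f)\subseteq T$; and finally any coincidence $u_a=u_b$ with $a<b<k_z$ would trap the forward orbit in $\{u_a,\ldots,u_{b-1}\}\subseteq X\setminus C$, contradicting $u_{2n}=x\in C$. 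This yields $n$ distinct elements of $T$, so $L\ge n$, contrary to $n>L$.

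For what it is worth, the paper's proof of the related Theorem~\ref{non-isolated} proceeds quite differently: it first shows that $g$ permutes the set of non-isolated fixed points, then splits into the case where $g$ fixes one of them (and inductively orders $T_{x_1}(f)$ so that $g$ moves each element strictly downward) versus the case where $g$ acts without fixed points on that set (forcing a short cycle whose length would have to divide $n$). Your approach, by contrast, tracks a single forward orbit and extracts the $n$ witnesses directly from the segment just before it enters the cycle $C$; it is more streamlined for the Rice--Schweizer--Sklar statement and does not require any hypothesis on the number $k$ of non-isolated fixed points.
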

\noindent  On the other hand, Theorem 1 in \cite{choczewski1992}, as seen in item {\bf (v)},
   is based on the degrees of the polynomials under consideration and applies only to iterative roots of prime orders.
We now prove a similar but more general result for general
single-valued maps based on the number of non-isolated fixed points.
It can also be applied to show the nonexistence of  iterative roots of
non-prime orders.

  \begin{theorem}\label{non-isolated}
        Let $f\in \mathfrak{F}(X)$ has $k\ge 2$ non-isolated fixed points $x_1, x_2,\ldots, x_k$ such that {\bf (a)} $\#T_{x_j}(f)\le l$ for all $1\le j\le k$, and {\bf (b)} $f^{-1}(\{y\})\ne \emptyset$ for all $y\in T_{x_j}(f)$ and $1\le j\le k$. Then $f$ has no iterative roots of order $n>l$ satisfying that $m\nmid n$ for all $2\le m\le k$.
  \end{theorem}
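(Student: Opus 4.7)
The plan is to argue by contradiction: assume $g \in \mathfrak{F}(X)$ satisfies $g^n = f$ with $n > l$ and $m \nmid n$ for all $2 \le m \le k$, and derive a contradiction in two stages. The first stage uses the divisibility condition to force $g(x_j) = x_j$ for each $j$; the second uses condition (b) together with a pigeonhole argument on $T_{x_j}(f)$ to contradict $n > l$.

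For the first stage, observe that $g^n = f$ gives $f \circ g = g \circ f$, so $g$ maps ${\rm Fix}(f)$ into itself, and $g|_{{\rm Fix}(f)}$ is a bijection since its $n$-th iterate is the identity. I claim $g$ even sends $\{x_1,\ldots,x_k\}$ into itself. Suppose for contradiction that $x' := g(x_j)$ is an isolated fixed point, so $f^{-1}(\{x'\}) = \{x'\}$. Commutativity gives $g(f^{-1}(\{x_j\})) \subseteq f^{-1}(\{x'\}) = \{x'\}$, whence $g(w) = x'$ for every $w \in T_{x_j}(f)$. Pick such a $w$ (it exists because $x_j$ is non-isolated), and by (b) choose $u$ with $f(u) = w$; the same commutativity forces $g(u) = x'$. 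From $g^n(u) = f(u) = w$ we read off $g^{n-1}(x') = w$, and one further application of $g$ yields $g^n(w) = w$, contradicting $g^n(w) = f(w) = x_j \ne w$. Hence $g$ permutes $\{x_1,\ldots,x_k\}$; any cycle length $p$ of this permutation satisfies $p \mid n$ and $p \le k$, so the hypothesis that no $m$ with $2 \le m \le k$ divides $n$ forces $p = 1$. Therefore $g(x_j) = x_j$ for every $j$.

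For the second stage, fix any $j$. Since $g$ fixes $x_j$, the set $f^{-1}(\{x_j\}) = \{x_j\} \cup T_{x_j}(f)$ is forward $g$-invariant, and $g^n(y) = x_j$ for every $y \in T_{x_j}(f)$. So starting from any $y \in T_{x_j}(f)$ and following its $g$-orbit, the element immediately preceding the first visit to $x_j$ gives some $y^* \in T_{x_j}(f)$ with $g(y^*) = x_j$. Apply (b) to pick $u^* \in f^{-1}(\{y^*\})$. For every $a \ge 1$, $f(g^a(u^*)) = g^a(y^*) = x_j$, so $g^a(u^*) \in \{x_j\} \cup T_{x_j}(f)$; moreover, $g^a(u^*) = x_j$ for some $1 \le a \le n$ would propagate to $g^n(u^*) = x_j$, contradicting $g^n(u^*) = y^* \ne x_j$. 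Hence $g(u^*), g^2(u^*), \ldots, g^n(u^*)$ are $n$ elements of $T_{x_j}(f)$, a set of cardinality at most $l < n$. Pigeonhole yields $g^{a_1}(u^*) = g^{a_2}(u^*) =: v$ for some $1 \le a_1 < a_2 \le n$, so $v \in T_{x_j}(f)$ is $g$-periodic with period $p \mid a_2 - a_1$. If $p = 1$, then $g^n(v) = v$ clashes with $g^n(v) = f(v) = x_j \ne v$; if $p \ge 2$, then the cycle $\{v, g(v),\ldots, g^{p-1}(v)\}$ of $v$ contains $g^n(v) = x_j$, which is impossible because $x_j$ is a $g$-fixed point, and therefore has its own singleton cycle.

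The main obstacle is the first stage, specifically the verification that $g$ cannot send a non-isolated fixed point to an isolated one; this is the only place where condition (b) combines essentially with the commutativity $f \circ g = g \circ f$, and without this foothold the divisibility hypothesis on $n$ cannot be leveraged. Once $g(x_j) = x_j$ is in hand, the pigeonhole argument of the second stage is fairly mechanical, modulo the careful selection of $y^*$ that guarantees all $n$ iterates $g^a(u^*)$ for $1 \le a \le n$ land in $T_{x_j}(f)$.
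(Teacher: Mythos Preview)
Your proof is correct and follows the same overall architecture as the paper's: both argue by contradiction, first show that $g$ sends each non-isolated fixed point to a non-isolated fixed point (using condition {\bf (b)} and commutativity in essentially the same way), then invoke the divisibility hypothesis to reduce to a fixed $x_j$ with $g(x_j)=x_j$, and finally derive a contradiction inside $T_{x_j}(f)$ using a preimage $u^*$ of some $y^*\in T_{x_j}(f)$ with $g(y^*)=x_j$.

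The only genuine difference is in the final combinatorial step. The paper inductively orders $T_{x_1}(f)$ as $y_1,\ldots,y_{l_1}$ so that $g(y_i)\in\{x_1,y_1,\ldots,y_{i-1}\}$, concluding that $g^i$ collapses $T_{x_1}(f)\cup\{x_1\}$ to $x_1$ once $i\ge l_1$; then $y_1=g^n(z_1)=g^{n-1}(g(z_1))=x_1$ gives the contradiction directly. You instead show the $n$ iterates $g(u^*),\ldots,g^n(u^*)$ all lie in $T_{x_j}(f)$, apply pigeonhole to produce a $g$-periodic point $v\in T_{x_j}(f)$, and note that $f(v)=x_j$ forces $x_j$ into that periodic cycle, which is impossible. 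Your argument is slightly more direct and avoids the inductive rearrangement, while the paper's ordering yields the sharper quantitative statement that $g^{l_1}$ already collapses $T_{x_1}(f)$ to $x_1$; either route closes the proof with comparable effort.
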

\begin{proof}
    Suppose, on the contrary, that $f=g^n$ for some $g\in \mathfrak{F}(X)$, where $n>l$ and $m\nmid n$ for all $2\le m\le k$. First, we assert that $g(x_j)$ is a non-isolated fixed point of $f$ for all $1\le j \le k$.

     Since $f(g(x_j))=g(f(x_j))=g(x_j)$, clearly  $g(x_j)$ is a  fixed point of $f$ for all $1\le j \le k$. Next, to show that they are non-isolated, suppose that $g(x_{j_0})$ is isolated for some $1\le j_0 \le k$, and let $y_{j_0}, z_{j_0}\in X$ be such that $y_{j_0}\in T_{x_{j_0}}(f)$ and $z_{j_0}\in f^{-1}(\{y_{j_0}\})$, both of which exist by {\bf (a)} and {\bf (b)}.  Then, as $g(x_{j_0})$ is isolated and
    \begin{align*}
        f^2(g(z_{j_0}))=g(f^2(z_{j_0}))=g(f(y_{j_0}))=g(x_{j_0}),
    \end{align*}
     we have $g(z_{j_0})=g(x_{j_0})$, implying that
     \begin{align*}
        y_{j_0}=f(z_{j_0})=g^{n-1}(g(z_{j_0}))=g^{n-1}(g(x_{j_0}))=f(x_{j_0})=x_{j_0}.
     \end{align*}
This contradicts our assumption that $y_{j_0}\in T_{x_{j_0}}(f)$. Therefore our assertion holds.

Now, since $x_1, x_2,\ldots, x_k$ are the only non-isolated fixed points of $f$ by hypothesis, we have
\begin{align}\label{g-invariant}
     g(\{x_1,x_2,\ldots, x_k\})\subseteq \{x_1,x_2,\ldots, x_k\}.
\end{align}
We discuss in the following two cases.

\noindent {\bf Case (i):} Suppose that $g(x_{j_0})=x_{j_0}$ for some $1\le j_0\le k$.
Without loss of generality, let $j_0=1$.
We first prove by induction that elements of $T_{x_1}(f)$ can be arranged in a sequence $y_1, y_2, \ldots, y_{l_1}$ such a way that
\begin{eqnarray}\label{g(Tx1)}
    g(y_i)\in \{x_{1}, y_1, y_2,\ldots, y_{i-1}\}~~ \text{for all}~~1\le i\le l_1,
\end{eqnarray}
 where $l_1:=\#T_{x_1}(f)$.

 Since $    f(g(y))=g(f(y))=g(x_1)=x_1$
 for all $y\in T_{x_1}(f)$, it is clear that
 \begin{align*}\label{gT}
    g(T_{x_1}(f))\subseteq T_{x_1}(f)\cup \{x_1\}.
 \end{align*}
Further, since $f=g^n$,  we cannot have $g(T_{x_1}(f))\subseteq T_{x_1}(f)$ (otherwise, $g^i(T_{x_1}(f))\subseteq T_{x_1}(f)$ for all $i\in \mathbb{N}$, and in particular $f(T_{x_1}(f))=g^n(T_{x_1}(f))\subseteq T_{x_1}(f)$, which is a contradiction to the definition of $T_{x_1}(f)$). Therefore there exists a $y_1\in T_{x_1}(f)$ such that $g(y_1)=x_1$, proving \eqref{g(Tx1)} for $i=1$. Next, assume that we have already defined $y_1, y_2, \ldots, y_i$ for some $1\le i\le l_1-1$. Then, using a similar argument as above, we see that
\begin{align*}
    g(T_{x_1}(f)\setminus\{y_1,y_2,\ldots y_i\})\nsubseteq T_{x_1}(f)\setminus\{y_1,y_2,\ldots y_i\}.
\end{align*}
Therefore there exists a $y_{i+1}\in T_{x_1}(f)\setminus\{y_1,y_2,\ldots y_i\}$ such that $g(y_{i+1})=x_1$, proving \eqref{g(Tx1)} for $i+1$. Thus, the existence of a desired rearrangement of $T_{x_1}(f)$ follows from induction.

Now, since $f^{-1}(\{y_1\})\ne \emptyset$ by {\bf (b)}, there exists a $z_1\in X$ such that $f(z_1)=y_1$.  Then
\begin{align*}
    f(g(z_1))=g(f(z_1))=g(y_1)=x_1,
\end{align*}
implying that $g(z_1)\in T_{x_1}(f)\cup \{x_1\}$. Also, from \eqref{g(Tx1)} we see that $g^i(y)=x_1$ for all $y\in T_{x_1}(f)\cup \{x_1\}$ and $i\ge l_1$.  Therefore,  as $n-1\ge l$ and $l\ge l_1$ by {\bf (a)}, we obtain
\begin{align*}
    y_1=f(z_1)=g^{n-1}(g(z_1))=x_1,
\end{align*}
which is a contradiction to our assumption on $y_1$ that $y_1\in T_{x_1}(f)$.

\noindent {\bf Case (ii):} Suppose that Case (i) does not hold. Then, as $g(x_{j})\ne x_{j}$ for all $1\le j\le k$, by using \eqref{g-invariant} we see that $g$ has a $q$-periodic point $x_{j_0}$ in $\{x_1,x_2,\ldots, x_k\}$ for some $1\le j_0\le k$ and $2\le q\le k$. As in Case (i), without loss of generality we assume that $j_0=1$. By our assumption on $x_1$, we have
\begin{eqnarray}\label{q-periodic}
    g^q(x_1)=x_1\quad \text{and}\quad g^i(x_1)\ne x_1~\text{for all}~1\le i\le q-1.
\end{eqnarray}
Further, since $2\le q\le k$, and $m\nmid n$ for all $2\le m\le k$ by hypothesis, we see from the  division algorithm that $n=sq+r$ for some $s\in \mathbb{N}$ and $1\le r\le q-1$.  Therefore, by \eqref{q-periodic} we have
\begin{eqnarray*}
    f(x_1)=g(x_1)=g^{sq+r}(x_1)=g^r(x_1)\ne x_1,
\end{eqnarray*}
which is a contradiction to our assumption that $x_1$ is a fixed point of $f$.

Thus, we get a contradiction in both the cases, proving that  $f$ has no iterative roots of order $n>l$ such that $m\nmid n$ for all $2\le m\le k$.
\end{proof}

It is worth noting that the assumption on $n$ that $n>l$ and $m\nmid n$ for all $2\le m\le k$ in the above theorem cannot be relaxed, as shown in the following.
\begin{example}
    {\rm  Let
        \begin{eqnarray*}
            X=\{x_{j}:1\le j\le 4\}\bigcup\{y_{j}^{(i)}:i=1,2~\text{and}~1\le j\le 4\}\bigcup\{z_{j}^{(i)}:i=1,2~\text{and}~1\le j\le 4\}
        \end{eqnarray*}
        and  $f:X\to X$ be defined by
        \begin{eqnarray*}
            f(x_j)&=&x_j\quad \text{for}~1\le j\le 4\\
            f(y_{j}^{(i)})&=&x_j\quad \text{for}~i=1,2~\text{and}~1\le j\le 4,\\
            f(z_{j}^{(i)})&=& y_{j}^{(i)} \quad \text{for}~i=1,2~\text{and}~1\le j\le 4
    \end{eqnarray*}
(see Figure \ref{Fig6}). Then it is clear that $x_1, x_2, x_3, x_4$ are the only  non-isolated fixed points of $f$, and $\#T_{x_j}(f)=2$ and $f^{-1}(\{y\})\ne \emptyset$ for all $y\in T_{x_j}(f)$ and $1\le j\le 4$. Therefore, it follows from  Theorem \ref{non-isolated} with $k=4$ and $l=2$ that $f$ has no iterative roots of order $n>l$ such that  $m\nmid n$ for all $2\le m\le k$. However, $f$ has an iterative root $g$ of order $n=4>l$ on $X$, given by
\begin{eqnarray*}
    g(x_j)&=& x_{j+1({\rm mod}~4)}\quad \text{for}~1\le j\le 4\\
g(y_j^{(i)})&=&y_{j+1}^{(i)}\quad \text{for}~i=1,2~ \text{and}~1\le j\le 3\\
g(y_{j}^{(i)})  &=& x_1 \quad \text{for}~i=1,2~\text{and}~j=4,\\
g(z_j^{(i)})&=&z_{j+1}^{(i)}\quad \text{for}~i=1,2~ \text{and}~1\le j\le 3,\\
g(z_{j}^{(i)})  &=& y_1^{(i)} \quad \text{for}~i=1,2~\text{and}~j=4
\end{eqnarray*}
(see Figure \ref{Fig7}), which is divisible by $m=2<k$. }
\end{example}
\begin{figure}[h]
    \centering
    \includegraphics[scale=0.6]{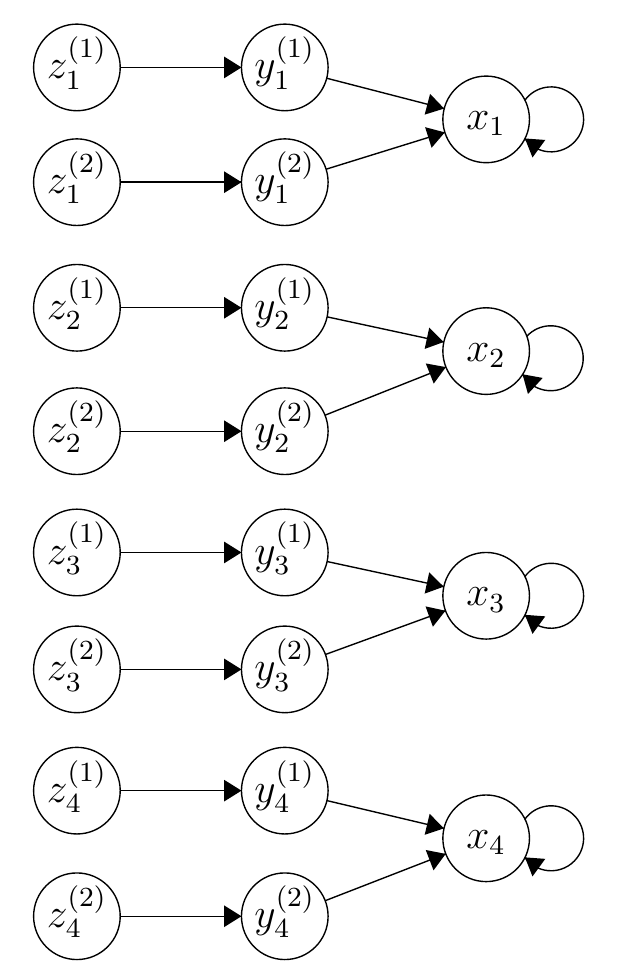}
    \caption{$f$}
    \label{Fig6}
\end{figure}
\begin{figure}[h]
    \centering
    \includegraphics[scale=0.6]{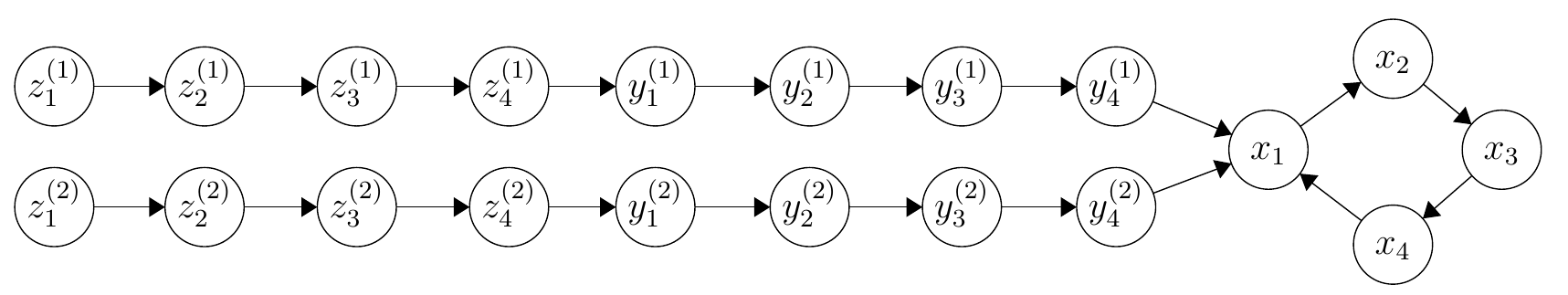}
    \caption{$g$}
    \label{Fig7}
\end{figure}

Since  every complex polynomial of degree $d\ge 2$ can have at most one isolated fixed point as seen from Lemma 4 in \cite{choczewski1992}, the only possible values for the pair $(k,l)$ considered in Theorem \ref{non-isolated} are $(d,d-1)$ and $(d-1,d-1)$. However, if $(k,l)=(d,d-1)$, then, in light of Theorem 4  in \cite{rice1980}, both Theorem \ref{non-isolated} and Theorem 1 in \cite{choczewski1992} provide the same results because every composite number $n$ between $k-1$ and $k(k-1)$ is divisible by $m$ for some $2\le m\le k$. Thus, the actual contribution of our Theorem \ref{non-isolated} to complex polynomials $f$ is limited to the case $(k,l)=(d-1,d-1)$, where $f$ has the form $\alpha(z-\beta)^d+\beta$ with $\alpha\ne 0$.
In fact, Theorem \ref{non-isolated} implies that {\it every $p$-th power has no $p$-th roots for all primes $p$}, which is not guaranteed by either Theorem 4 in \cite{rice1980} or Theorem 1 in \cite{choczewski1992}. More precisely, we have the following.

\begin{corollary}\label{prime}
If $d\ge 2$ is a prime, then the complex polynomial function $z\mapsto \alpha (z-\beta)^d+\beta$ with $\alpha\ne 0$ has no iterative roots of order $d$ on $\mathbb{C}$.
\end{corollary}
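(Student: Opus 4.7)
The plan is to apply Theorem~\ref{non-isolated} directly to $f(z) := \alpha(z-\beta)^d + \beta$ when $d \ge 3$, and to fall back on the quadratic case already contained in Corollary~\ref{C5} when $d = 2$. First, I would observe that $f$ is a non-constant polynomial on $\mathbb{C}$, hence surjective by the fundamental theorem of algebra, so $f^{-1}(\{y\}) \ne \emptyset$ for every $y \in \mathbb{C}$; this automatically supplies hypothesis \textbf{(b)} of Theorem~\ref{non-isolated}.

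Next I would locate the fixed points. The substitution $w = z - \beta$ turns $f(z) = z$ into $w(\alpha w^{d-1} - 1) = 0$, so $f$ has exactly $d$ fixed points: the point $\beta$, together with the $d-1$ distinct solutions $x_1,\dots,x_{d-1}$ of $\alpha (z-\beta)^{d-1} = 1$. The fixed point $\beta$ is isolated because $f(z) = \beta$ forces $(z-\beta)^d = 0$. Each $x_j$ is non-isolated with $\#T_{x_j}(f) = d-1$: the equation $f(z) = x_j$ amounts to $\alpha(z-\beta)^d = x_j - \beta$, whose right-hand side is nonzero, so it has $d$ distinct roots in $\mathbb{C}$, one of which is $x_j$ itself (since $\alpha(x_j-\beta)^{d-1}=1$ gives $\alpha(x_j-\beta)^d = x_j-\beta$).

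For $d \ge 3$, I would now invoke Theorem~\ref{non-isolated} with $k = l = d - 1$. The hypothesis $n > l$ reads $d > d - 1$, which holds, and the divisibility condition $m \nmid n$ for all $2 \le m \le k$ translates into the statement that no integer in $[2, d-1]$ divides $d$, which is precisely the primality of $d$. The theorem thus yields that $f$ admits no iterative root of order $d$ on $\mathbb{C}$. The case $d = 2$ is not covered by Theorem~\ref{non-isolated} (which requires $k \ge 2$, whereas here only one non-isolated fixed point exists); however, in that case $f$ is a complex quadratic polynomial, and item~\textbf{(1)} of Corollary~\ref{C5} (the Rice--Choczewski result) says that such polynomials have no iterative roots of any order $n \ge 2$, in particular none of order $2$.

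I do not expect any real obstacle in executing this plan: the fixed-point computation and the verification of $\#T_{x_j}(f) = d-1$ are elementary algebra, surjectivity of $f$ is immediate, and the primality hypothesis lines up exactly with the arithmetic condition in Theorem~\ref{non-isolated}. The only point demanding care is to remember that $d = 2$ has to be handled separately, since then the number of non-isolated fixed points drops below the threshold $k \ge 2$ required by Theorem~\ref{non-isolated}.
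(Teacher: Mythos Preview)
Your proposal is correct and follows the same approach as the paper, applying Theorem~\ref{non-isolated} with $k = l = d - 1$ to the non-isolated fixed points of $f$. You are in fact more careful than the paper's own proof by treating $d = 2$ separately via Corollary~\ref{C5}(1), since Theorem~\ref{non-isolated} as stated requires $k \ge 2$ non-isolated fixed points, whereas the paper applies it uniformly for all primes $d \ge 2$.
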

\begin{proof}
    Let $f(z)=\alpha(z-\beta)^d+\beta$ for all $z\in \mathbb{C}$, where $\alpha\ne 0$ and $d\ge 2$ is a prime. Then $f$ has $d-1$ non-isolated fixed points, which are precisely  $\beta+\alpha^{-1/d-1}$, where $\alpha^{-1/d-1}$ denote any of the $d-1$ possible values. Therefore we see from Theorem \ref{non-isolated} that $f$ has no iterative roots of order $n>d-1$ on $\mathbb{C}$ with $m\nmid n$ for all $2\le m\le d-1$.  In particular, as $d\ge 2$ is a prime, it follows that $f$ has no iterative roots of order $d$ on $\mathbb{C}$.
\end{proof}

The following example illustrates the above corollary.

\begin{example}
    {\rm Consider the polynomial function $f(z)=z^5$ on $\mathbb{C}$, to which the result in \cite{Solar1976} does not apply, as seen in item {\bf (ii)} of the proof of Corollary \ref{C5}. Then it follows from the above corollary that $f$ has no iterative roots of order $5$ on $\mathbb{C}$, which is not guaranteed by Theorem 1 in \cite{choczewski1992}. }
\end{example}

Finally, we remind that, while  Theorem \ref{non-isolated} and Corollary \ref{prime} are concerned with single-valued maps, we can use Theorem \ref{pullback} to obtain similar results for their pullback multifunctions. It is worth noting that these results differ from those obtained for general multifunctions in Section \ref{Sec2} in the following sense: The former is based on a fixed point of the map under consideration, whereas the latter is based on a point that is not a fixed point.

\bibliographystyle{amsplain}

\end{document}